\title{Forking and JSJ decompositions in the free group II}
\date{\today}
\author{Chlo\'{e} Perin \and Rizos Sklinos}
\begin{document}

\maketitle

\begin{abstract} 
We give a complete characterization of the forking independence relation over any set of parameters in the free groups of finite rank, in terms of the $JSJ$ decompositions relative to those parameters.
\end{abstract}

\section{Introduction} 

In \cite{SelaStability}, Sela proves that the first order theory of torsion free hyperbolic groups, and thus in particular that of free groups, is stable. This amazing result implies the existence of a good notion of independence between tuples of elements over any set of parameters, akin to that of algebraic independence in algebraically closed fields. It was thus natural to ask whether there was a way to characterize this notion of independence (called forking independence) in a purely group theoretic way.

In \cite{PerinSklinosForking} we gave such a description for two families of parameter sets: those which are free factors of the free group $\F$, and at the other extreme, those which are not contained in any proper free factor of $\F$.

Under the assumption that the parameter set $A$ is a free factor of $\F$, we showed two tuples $b$ and $c$ are independent over $A$ if and only if there is a free product decomposition $\F = F_b * A * F_c$ such that $b \in F_b * A$ and $c \in A*F_c$. In other words, the tuples are independent if and only if there is a Grushko decomposition for $\F$ relative to $A$ for which $b$ and $c$ live in "different parts".

In the case where $A$ is not contained in any proper free factor, we proved that two tuples $b$ and $c$ are independent over $A$ if and only if they live in "different parts" of the pointed cyclic JSJ decomposition of $\F$ relative to $A$. A cyclic JSJ decomposition relative to $A$ is a graph of groups decomposition which encodes all the splittings of $\F$ as an amalgamated product or an HNN extension over a cyclic group for which $A$ is contained in one of the factors. Under our assumption on $A$, there is a canonical JSJ decomposition for $\F$ relative to $A$, namely the tree of cylinders of the deformation space. 

The aim of the present paper is to complete this description for any set of parameters. In this setting, there is no canonical JSJ decomposition - we thus give a condition in terms of all the \textbf{normalized} pointed cyclic JSJ decompositions (see Definition \ref{NormalizedJSJDef}). Our main result is:
\begin{thmIntro}\label{MainTheorem}
Let $A\subset \F$ be a set of parameters and $b,c$ be tuples from $\F$. Then $b$ is 
independent from $c$ over $A$ if and only if there exists a normalized cyclic $JSJ$ decomposition $\Lambda$ of $\F$ relative to $A$ in which any two blocks of the minimal subgraphs $\Lambda^{min}_{Ab}, \Lambda^{min}_{Ac}$ of $\langle A,b\rangle$ 
and $\langle A,c\rangle$ respectively intersect at most in a disjoint union of envelopes of rigid vertices. 
\end{thmIntro}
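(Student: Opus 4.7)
The plan is to combine the two cases already treated in \cite{PerinSklinosForking} via the Grushko decomposition of $\F$ relative to $A$. Write $\F \simeq F_0 * \F_A * F_1 * \cdots * F_k$ with $\F_A$ the smallest free factor containing $A$, so that inside $\F_A$ the set $A$ is not contained in any proper free factor. The relative cyclic JSJ characterization of Part I applies to $\F_A$, while the external free factors are governed by the free-factor case. A normalized cyclic JSJ of $\F$ relative to $A$ should refine the Grushko decomposition by a JSJ of $\F_A$ relative to $A$, so that the notions of envelope of a rigid vertex and of minimal subgraph make sense in this mixed setting; the main theorem will then be a precise coupling of the two Part-I descriptions.

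\textbf{Forward direction.} Suppose $\Lambda$ is a normalized JSJ with the stated separation property. The strategy is to construct modular automorphisms of $\F$ witnessing non-forking. Collapsing $\Lambda$ to the common intersection $R$ of the two minimal subgraphs produces a splitting of $\F$ in which $R$, a disjoint union of envelopes of rigid vertices, sits in the middle, while each of $\langle A,b\rangle$ and $\langle A,c\rangle$ is contained up to conjugation in a subgraph on one side. Dehn twists supported on the edges separating these sides, together with mapping-class-group elements of any surface-type vertex outside the intersection, fix $A$ and a realization of $\tp(c/A)$ pointwise but move $b$, producing an indiscernible sequence over $A \cup c$. By the standard test-sequence argument in the stable theory of $\F$, this shows $\tp(b/Ac)$ does not fork over $A$.

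\textbf{Backward direction.} Conversely, suppose $b \forkindep_A c$. Start with any normalized JSJ $\Lambda_0$ of $\F$ relative to $A$ and consider the minimal subgraphs carrying $\langle A,b\rangle$ and $\langle A,c\rangle$. Using the modular group of $\Lambda_0$ and the deformation-space structure on normalized JSJs, modify $\Lambda_0$ to a new $\Lambda$ in which these two minimal subgraphs are as disjoint as possible. If in every normalized $\Lambda$ some pair of blocks of the two minimal subgraphs still intersected in more than a disjoint union of envelopes of rigid vertices, then either a shared flexible (surface-type) vertex or a shared edge carrying a non-trivial Dehn twist would survive; one could then exhibit a modular automorphism whose action on $b$ cannot be absorbed by any modular automorphism of the complementary piece, giving rise to a definable relation that contradicts $b \forkindep_A c$.

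\textbf{Main obstacle.} The principal difficulty lies in the normalization step. Unlike in the regime of \cite{PerinSklinosForking} where $A$ is not in a proper free factor and a canonical tree of cylinders is available, there is in general no preferred normalized JSJ, and the modular group acts non-trivially on the collection of them. Establishing that at least one representative exhibits the required separation of $\Lambda^{min}_{Ab}$ from $\Lambda^{min}_{Ac}$ will require a careful combinatorial argument, most likely by induction on some complexity measure of candidate JSJs, along with a precise description of which modular moves separate two prescribed minimal subgraphs. A secondary challenge is handling the interface between the Grushko level and the cyclic-JSJ level of the decomposition, in particular correctly identifying envelopes of rigid vertices when $b$ or $c$ have non-trivial projections onto several Grushko factors simultaneously.
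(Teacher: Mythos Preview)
Your proposal has real gaps in both directions, and misses the specific mechanisms that make the argument work.

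\textbf{Forward direction.} Producing modular automorphisms that fix $Ac$ and move $b$ gives many realizations of $\tp(b/Ac)$, but this does not by itself show non-forking over $A$; an indiscernible sequence over $Ac$ is not a witness to non-forking. What the paper actually does is the reverse containment: it shows that the orbit $X=\Aut_{Ac}(\F)\cdot b$, which is definable over $Ac$ when $\F$ is freely indecomposable relative to $Ac$, contains the orbit $\Mod_A(\F_A)\cdot b$. Since $\Mod_A(\F_A)$ has finite index in $\Aut_A(\F_A)$, this is an almost $A$-invariant set, and then atomicity of $\F_A$ over $A$ (Lemma~\ref{AtoFork}) gives non-forking. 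This only handles the special case $b\in\F_A$; the general case requires an additional layer you do not mention: decomposing $b,c$ into \emph{sandwich terms} and running an induction via transitivity of forking along a chain of subgraphs of $\Lambda$ (Propositions~\ref{JoinTheDots} and~\ref{ForkingCalculus}). The collapse-to-intersection picture you sketch does not survive this, because the blocks of $\Lambda^{min}_{Ab}$ and $\Lambda^{min}_{Ac}$ may interleave along $\Lambda_{\F_A}$ rather than sit cleanly on two sides.

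\textbf{Backward direction.} The idea of modifying $\Lambda_0$ by modular moves until the minimal subgraphs separate is natural, but you give no mechanism that forces such a $\Lambda$ to exist, and the claimed contradiction (``a modular automorphism whose action on $b$ cannot be absorbed'') is not a forking witness. The paper's argument is quite different and uses two ingredients you omit. First, from the JSJ of $\F_A$ one reads off concrete elements of $\acl^{eq}(Ab)$ and $\acl^{eq}(Ac)$ (conjugacy classes of pairs of rigid vertex generators, or of simple closed curves on surface vertices; Proposition~\ref{SeeAclEqInLambda}), and shows these are not in $\acl^{eq}(A)$ (Proposition~\ref{NotInAcl}); independence then forces the subgraphs $\Gamma^i_{Ab},\Gamma^j_{Ac}$ to meet only in envelopes of rigid vertices. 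Second, to produce the actual normalized $\Lambda$, the paper passes to a doubled group $\hat\F=H*\F_A*H'$, applies the already-proved forward direction to $b$ and the copy $c'$ of $c$, and then uses stationarity over $\acl(A)$ together with homogeneity of the free group to transport this back to a splitting $\F=\F_b*\F_A*\F_c$ with $b\in\F_b*\F_A$ and $c\in\F_A*\F_c$. This last step is the genuinely non-obvious move, and nothing in your outline substitutes for it.
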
   
(For a precise definition of blocks of minimal subgraphs and envelope of a rigid vertex, see Definition \ref{EnvelopesDef}).

Before Sela's work proving the stability of torsion free hyperbolic groups, only the families of abelian groups and algebraic groups (over algebraically closed fields) were known to be stable. For these families it was fairly easy to understand and characterize forking independence. But there is a qualitative difference between the already known examples of 
stable groups and torsion-free hyperbolic groups: a group elementarily equivalent to an abelian group is an 
abelian group and likewise a group elementarily equivalent to an algebraic group is an algebraic group. 
This is certainly false in the case of torsion-free hyperbolic groups: an ultrapower of a non abelian free 
group under a non-principal ultrafilter is {\bf not} free. Even more, by definition, a non finitely generated 
group cannot be hyperbolic.  

The difficulty in describing forking independence arising from this easy observation is significant. To emphasize this fact we recall the definition of forking independence. 
\begin{definition} Let $M$ be a stable structure and let $A$ be a subset of $M$. Tuples $b$ and $c$ of elements of $M$ fork over $A$ (in other words, are NOT independent over $A$) if and only if there exists a set $X$ definable over $Ac$ which contains $b$, and a sequence of automorphisms $\theta_n \in \Aut_A(\hat{M})$ for some elementary extension $\hat{M}$ of $M$, such that the translates $\theta_n(X)$ are $k$-wise disjoint for some $k \in \N$.
\end{definition}
(For some intuition on the notion of forking, see Section 2 of \cite{LouPeSk}). 

Thus a priori, even for understanding the independence relation provided by stability in natural models of 
our theory, one has to move to a {\em saturated model} of the theory. The main problem with that is that 
we have very little knowledge of what a saturated model of the theory of nonabelian free groups look like. 
In \cite{PerinSklinosForking} we managed to overcome this difficulty by using the assumptions we imposed on the parameter set $A$. When $A$ is not contained in any proper free factor, there are a number of useful results available. Model theoretically: under this assumption, $\F$ is atomic over $A$, i.e. every type can in fact be defined by a single formula (see \cite{PerinSklinosForking}, \cite{OuldHoucineHomogeneity}). This enables us on the one hand to transfer a sequence witnessing forking from a big model to $\F$, but more importantly it gives us a natural candidate for a formula witnessing forking: by homogeneity (see \cite{PerinSklinosHomogeneity} and \cite{OuldHoucineHomogeneity}), types in $\F$ correspond to orbits under the automorphism group, so in fact the orbit of a tuple under $\Aut_A(\F)$ is definable. Geometrically, under the assumption that $A$ is not contained in any proper free factor, 
we have a very good understanding of $\Aut_A(\F)$: the canonical JSJ decomposition of $\F$ relative to $A$ enables one to describe up to finite index the automorphisms fixing $A$ (one understands the modular automorpshism group $\Mod_A(\F)$). 

The following example illustrates why in Theorem \ref{MainTheorem}, it is necessary to consider all JSJ decompositions with respect to $A$. The idea is that with trivially stabilized edges one can create "artificial intersection" between the minimal subgraphs of two tuples.

\begin{example} Suppose that $\F = \F_A * \langle t \rangle$, where $\F_A$ is the smallest free factor of $\F$ containing $A$. Suppose further that the cyclic JSJ decomposition $\Lambda_A$ of $\F_A$ relative to $A$ (corresponding to the tree of cylinders) consists of two rigid vertices $u,v$ with stabilizers $U$ and $V$, where $A \leq U$, and one $Z$-type vertex $z$ with cyclic stabilizer, and two edges joining $z$ to $u,v$ respectively (see Figure \ref{FigTwoJSJs}).

\begin{figure*}[ht!]\label{FigTwoJSJs}
\centering
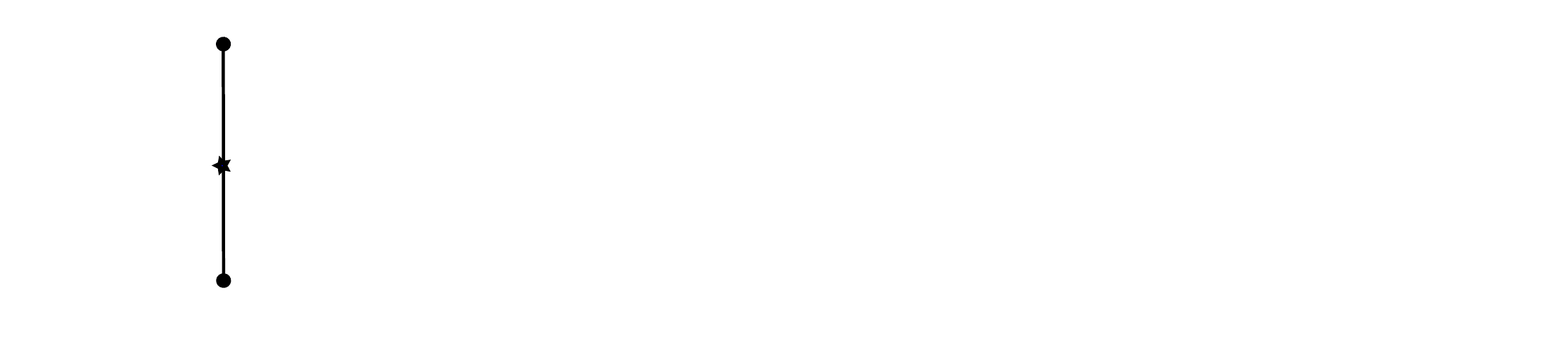
\caption{In the first JSJ decomposition, the minimal subgraphs of $Ab$ and $Ac$ meet in more than a union of envelopes of a rigid vertex, while in the second their intersection contains only rigid vertices. (Dot vertices represent rigid vertices, stars represent $Z$-type vertices, dashed edges are trivially stabilized while full ones have infinite cyclic stabilizers.)}
\end{figure*}

Let $g \in V$, and consider the elements $c=g$ and $b=tgt^{-1}$. 

A normalized JSJ decomposition for $\F$ consists of adding exactly one trivially stabilized edge to $\Lambda_A$, with Bass-Serre element corresponding to $t$ (see Figure \ref{FigTwoJSJs}). If we add it as a loop joining $u$ to itself, then the minimal subgraph of $\langle A, tgt^{-1} \rangle$ will be the whole graph of groups $\Lambda$. Since the minimal subgraph for $\langle A, g \rangle$ is $\Lambda_A$, the two minimal subgraph intersect in two distinct edges of a cylinder - this is not contained in a disjoint union of envelopes of rigid vertices (see Definition \ref{EnvelopesDef}). On the other hand, if we add it as an edge joining $u$ to $v$, the minimal subgraph of $\langle A, tgt^{-1} \rangle$ will "jump over" the edges of the minimal subgraph of $\langle A, g \rangle$, and the intersection will contain only the vertices $u$ and $v$, which means that the condition for independence in Theorem \ref{MainTheorem} is satisfied. 
\end{example}

\paragraph{Outline of the proof. } We start by proving the right to left direction of Theorem \ref{MainTheorem} in the special case where $b$ is contained in the smallest free factor $\F_A$ containing $A$. In this case, we can assume $\F$ to be freely indecomposable with respect to $Ac$, and thus the orbit $X =\Aut_{Ac}(\F) \cdot b$ of $b$ under $\Aut_{Ac}(\F)$ is definable over $Ac$ (see Theorem 5.3 in \cite{PerinSklinosForking}). We then proceed to show that under the assumption that the minimal subgraphs are essentially disjoint, the orbit of $b$ under the modular group $\Mod_{Ac}(\F)$, which is contained in $X = \Aut_{Ac}(\F) \cdot b$, is almost $A$-invariant, i.e. that it has finitely many images by $\Aut_A(\F)$. We do this by showing that this set is $\Mod_A(\F_A)$ invariant, that is, that any image of $b$ by a modular automorphism of $\F_A$ fixing $A$ can be obtained by a modular automorphism of $\F$ fixing both $A$ and $c$ - this relies mostly on the understanding of modular group given by the JSJ decompositions. Note that since $X$ is contained in any set definable over $Ac$ which contains $b$, this implies that we cannot find a sequence of automorphisms $\theta_n$ \textbf{of the standard model} $\F$ which would witness forking, since any translate of $X$ by such an automorphism would contain one of the finitely many images of $\Mod_{Ac}(\F) \cdot b$ under $\Aut_A(\F)$. Now $X$ is contained in $\F_A$, hence it is in fact definable over $\F_A$. Now according to Lemma 2.10 in \cite{PerinSklinosForking} (see Lemma \ref{AtoFork} in this paper), atomicity of $\F_A$ over $A$ implies that even passing to an elementary extension of $\F$ one cannot find such a witnessing sequence, in other words, that $b$ and $c$ are independent over $A$. 

To prove the general case of the same direction, we decompose $b$ and $c$ in what we call \textbf{sandwich terms} - that is, terms of the form $t\beta s$ where $t,s$ are Bass-Serre elements associated to trivially stabilized edges, and $\beta$ is an element of $\F_A$. We show in Proposition \ref{JoinTheDots} that if $b$ and $c$ are tuples of sandwich terms, and there exists a subgraph $\Delta$ of $\Lambda$ which contains the minimal subgraph of $Ab$, whose intersection with the minimal subgraph $\Lambda_A$ of $\F_A$ is connected, and whose intersection with the minimal subgraph of any element of $c$ is contained in a disjoint union of envelopes of rigid vertices, then $b$ and $c$ are independent over $A$. By subdividing the sandwich terms obtained from general tuples $b$ and $c$ into alternating groups according to the path that join them to $v_A$, we finally prove the general case by induction and forking calculus (see Example \ref{ForkingCalculusEx} for a special case). 

To prove the other direction, we show first that there exists for each tuple $b$ in $\F$ some subgraphs $\Gamma^i_{Ab}$ of the JSJ decomposition $\Lambda_A$ of $\F_A$ (the smallest free factor containing $A$) from which one can read off some elements of $\acl^{eq}(Ab)$, and such that there exists a normalized JSJ decomposition $\Lambda_{Ab}$ for $\F$ relative to $A$ in which the minimal subgraph of $Ab$ is contained in a union of these subgraphs with trivially stabilized edges. We first see that if $b$ and $c$ are independent over $A$ the subgraphs $\Gamma^i_{Ab}$ and $\Gamma^j_{Ac}$ must intersect in disjoint union of envelopes of rigid vertices.

From the two decomposition $\Lambda_{Ab}$ and $\Lambda_{Ac}$ we build a group $\hat{F} = \F *_{\F_A}*\F'$ which is an amalgamation of two copies of $\F$ along $\F_A$. Then by the first direction of the main result (which we already proved), we see that $b$ must be independent over $A$ from the element $c'$ corresponding to $c$ in $\F'$. By then using stationarity of types and homogeneity of the free group (following the lines of the proof of \cite[Theorem 3.4]{PerinSklinosForking} ), we show that there exists a decomposition of $\F$ as $\F_b*\F_A*\F_c$ such that $Ab \in \F_b * \F_A$ and $Ac \in \F_A * \F_c$. It then follows that the decompositions $\Lambda_{Ab}$ and $\Lambda_{Ac}$ can be combined into a normalized JSJ for $\F$ which satisfies the necessary conditions. 

\paragraph{Acknowledgements.} We are grateful to Zlil Sela for pointing out a mistake in the original version of Proposition 5.3.

\section{Stability} \label{StabilitySec}

A detailed account of the background on stability needed for this article has been already developed in 
\cite[Section 2]{PerinSklinosForking}, thus for avoiding repetition we only state here the notions and results of fundamental importance to this article.  

We fix a stable first-order theory $T$ and we work in a ``big'' saturated model $\mathbb{M}$ of $T$, which  
is usually called the {\em monster model} (see \cite[p.218]{MarkerModelTheory}). Capital letters $A,B,C,\ldots$ denote 
parameter sets of small cardinality, i.e. cardinality strictly less than $\abs{\mathbb{M}}$.

\begin{definition} A formula {\em $\phi(\bar{x},b)$ forks over $A$} if there are $n<\omega$ and 
an infinite sequence $(b_i)_{i<\omega}$ of tuples in $\mathbb{M}$ such that $tp(b/A)=tp(b_i/A)$ for $i<\omega$, and 
the set $\{\phi(\bar{x},b_i) : i<\omega\}$ is $n$-inconsistent (i.e, any subset of $n$ formulas is inconsistent).

A tuple $\bar{a}$ is {\em independent} from $B$ over $A$ (denoted $\bar{a} \underset{A}{\forkindep} B$)
if there is no formula in $tp(\bar{a}/B)$ which forks over $A$.
\end{definition}

Forking independence satisfies certain axioms, a list of which we have recorded in \cite[Section 2]{PerinSklinosForking}. 
We give next those important for this paper. 

\begin{fact} (Symmetry) $B \forkindep_A C$ if and only if $C \forkindep_A B$.
\end{fact}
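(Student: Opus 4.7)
The plan is to derive symmetry from the standard characterization of non-forking in stable theories via \emph{heirs} and \emph{coheirs}. After reducing to the case where the parameter set $A$ is an elementary submodel of the monster $\mathbb{M}$, one exploits the fact that in a stable theory every complete type is definable, and that the three notions --- non-forking extension, heir, and coheir --- coincide.

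First I would recall the relevant equivalences: for a small elementary submodel $M$ of $\mathbb{M}$, a global type $p$ extending $tp(B/M)$ is a non-forking extension if and only if $p$ is definable over $M$, if and only if $p$ is finitely satisfiable in $M$. The crucial ingredient is Shelah's theorem that complete types in a stable theory are definable, together with the finiteness of the local Shelah rank $R(-,\Delta,k)$ on consistent formulas, which characterizes non-forking as the property of not dropping any such rank.

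To prove symmetry over such a model $M$, I would assume $B \forkindep_M C$. Then $tp(B/MC)$ is finitely satisfiable in $M$, which provides, for every formula $\phi(\bar{x},\bar{y})$ over $M$, a defining formula $d\phi(\bar{y})$ over $M$ such that $\mathbb{M} \models \phi(B,\bar{c})$ if and only if $\mathbb{M} \models d\phi(\bar{c})$ for every tuple $\bar{c}$ from $MC$. Reinterpreting this defining schema from the side of $C$ shows that $tp(C/MB)$ is an heir of $tp(C/M)$, hence a non-forking extension, and thus $C \forkindep_M B$. The case of a general parameter set $A$ then follows via extension and base monotonicity: one finds a model $M \supseteq A$ with $BC \forkindep_A M$, applies symmetry over $M$, and transfers back to $A$.

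The hard part is technical rather than conceptual: establishing the equivalence between the syntactic conditions (heir and coheir) and the forking/rank conditions requires some care with types in $T^{eq}$, and the passage from models to arbitrary parameter sets uses the interaction of non-forking with algebraic closure. All of this is classical stability theory and can simply be invoked as a black box for the purposes of this paper.
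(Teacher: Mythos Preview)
Your proof sketch is correct and follows the classical route to symmetry in stable theories via the heir/coheir characterization and definability of types. However, the paper does not prove this statement at all: it is listed as a \emph{Fact}, i.e., a standard background result from stability theory, with the authors simply referring to \cite[Section 2]{PerinSklinosForking} for the list of forking axioms. So there is nothing to compare your argument against --- the paper treats symmetry as a black box, which is appropriate given that the result is entirely classical and the paper's contribution lies elsewhere.
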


\begin{fact}  \label{ForkTrans} (Transitivity)
Let $A\subseteq B\subseteq C$. Then $\bar{a} \underset{A}{\forkindep} C$    
if and only if $\bar{a} \underset{A}{\forkindep} B$ and $\bar{a} \underset{B}{\forkindep} C$.
\end{fact}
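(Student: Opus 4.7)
The plan is to treat the two directions separately, invoking the standard machinery of stable theories.

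\emph{Forward direction ($\Rightarrow$).} Assume $\bar{a} \forkindep_A C$. Since $\tp(\bar{a}/B) \subseteq \tp(\bar{a}/C)$, any formula witnessing $\bar{a} \not\forkindep_A B$ would already contradict $\bar{a} \forkindep_A C$; this yields $\bar{a} \forkindep_A B$ by monotonicity on the right. For $\bar{a} \forkindep_B C$, one uses base monotonicity: in a stable theory forking coincides with dividing, and a $B$-indiscernible sequence witnessing that some formula in $\tp(\bar{a}/C)$ divides over $B$ is automatically $A$-indiscernible (since $A \subseteq B$), so it would witness dividing over $A$, contradicting $\bar{a} \forkindep_A C$.

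\emph{Backward direction ($\Leftarrow$).} This is the substantive direction. My plan is to invoke the characterization of non-forking in stable theories via \emph{definability of types}: a complete type $p \in S(E)$ does not fork over $A \subseteq E$ if and only if $p$ is definable over $\acl^{eq}(A)$. Let $p = \tp(\bar{a}/C)$ and $q = \tp(\bar{a}/B)$. The assumption $\bar{a} \forkindep_A B$ gives that $q$ is definable over $\acl^{eq}(A)$; the assumption $\bar{a} \forkindep_B C$ gives that $p$ is definable over $\acl^{eq}(B)$. For each formula $\phi(\bar{x},\bar{y})$, the $\phi$-definitions of $p$ and of $q$ must agree on tuples from $B$. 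By uniqueness of $\phi$-definitions modulo equivalence, the canonical parameter of the $\phi$-definition of $p$, which a priori lies in $\acl^{eq}(B)$, is fixed by any automorphism of $\mathbb{M}$ fixing $\acl^{eq}(A)$ pointwise, so it already lies in $\acl^{eq}(A)$. Therefore $p$ is definable over $\acl^{eq}(A)$, which gives $\bar{a} \forkindep_A C$.

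The main obstacle is the last step: passing from agreement of the definitions on $B$ to definability of the whole of $p$ over $\acl^{eq}(A)$. This relies crucially on \emph{stationarity} of $\phi$-types over $\acl^{eq}$-closed bases, which in turn uses the uniqueness of canonical parameters for stable formulas. The forward direction is essentially formal, whereas the backward direction distills the content of stability: non-forking extensions are parametrized by canonical bases in $\mathbb{M}^{eq}$.
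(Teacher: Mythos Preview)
The paper does not give a proof of this statement: it is recorded as a standard fact about forking independence in stable theories, with a pointer to \cite[Section 2]{PerinSklinosForking} for background. So there is no paper proof to compare against; your sketch stands on its own.

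Your argument is the standard one and is essentially correct. The forward direction is exactly right (monotonicity plus base monotonicity, the latter via the observation that a $B$-indiscernible sequence is $A$-indiscernible). In the backward direction your overall strategy via definability and stationarity is the right one, but the sentence ``the canonical parameter of the $\phi$-definition of $p$ \ldots is fixed by any automorphism of $\mathbb{M}$ fixing $\acl^{eq}(A)$ pointwise'' is not justified by mere agreement of $d_p\phi$ and $d_q\phi$ on $B$. The clean way to finish is: the $\acl^{eq}(A)$-definition scheme of $q=\tp(\bar a/\acl^{eq}(B))$ determines a type $p'$ over $\acl^{eq}(C)$ which, being definable over $\acl^{eq}(A)$, does not fork over $A$ and hence not over $B$; both $p$ and $p'$ are then nonforking extensions to $\acl^{eq}(C)$ of the same type over $\acl^{eq}(B)$, so by stationarity (Fact~\ref{AlgStat}) $p=p'$, and in particular $p$ is definable over $\acl^{eq}(A)$. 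You correctly identify stationarity as the crux in your final paragraph; the automorphism claim is just not the right bridge to it.
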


\begin{fact}\label{ForkAlg}
Let $A\subseteq B$. Then $\bar{a} \underset{A}{\forkindep} B$ if and only if $acl(\bar{a}A) \underset{acl(A)}{\forkindep} acl(B)$. 
\end{fact}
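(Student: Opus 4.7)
The plan is to reduce the statement to two more basic properties of forking in stable theories, which together yield the claimed equivalence by a short chain of rewritings.

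First, I would invoke the standard stability-theoretic fact that forking is insensitive to replacing the base set by its algebraic closure: $\bar{a} \forkindep_A B$ holds if and only if $\bar{a} \forkindep_{\acl(A)} B$. The right-to-left direction is just monotonicity. For the left-to-right direction, the idea is that any sequence $(b_i)$ of realizations of $\tp(B/\acl(A))$ witnessing dividing of a formula over $\acl(A)$ can be adjusted: since $\acl(A)$ contains only a bounded number of realizations of each algebraic type over $A$, Ramsey together with compactness let us extract a subsequence on which the types over $A$ stabilize to an $A$-indiscernible sequence, producing a dividing witness over $A$ itself. Equivalently, in the stable setting one can extend $\tp(\bar{a}/A)$ to a non-forking extension over $\acl(A)$ using the extension axiom together with the boundedness of the set of non-forking extensions.

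Second, I would use the invariance of forking under algebraic closures on each side: for $A \subseteq X, Y$, one has $X \forkindep_A Y$ if and only if $\acl(X) \forkindep_A \acl(Y)$. By symmetry and monotonicity it suffices to verify that adding a single algebraic element $c \in \acl(X)$ to $X$ preserves non-forking with $Y$ over $A$. For this, observe that $c$ satisfies an algebraic formula $\varphi(x,\bar{x})$ over $X$; any sequence of $Y$-conjugates witnessing dividing of $\tp(Xc/Y)$ over $A$ would, after passing to a subsequence that controls the finitely many realizations of $\varphi$, descend to a dividing witness for $\tp(X/Y)$ over $A$.

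Combining the two, and noting that $A \subseteq B$ gives $\acl(A) \subseteq \acl(B)$ and hence $\acl(\acl(A)\cup B) = \acl(B)$, we obtain
\[
\bar{a} \forkindep_A B \;\iff\; \bar{a} \forkindep_{\acl(A)} B \;\iff\; \acl(\bar{a}A) \forkindep_{\acl(A)} \acl(B),
\]
which is precisely the claimed equivalence. The main technical obstacle is the first reduction, from $A$ to $\acl(A)$ as the base: controlling the behaviour of indiscernible sequences under enlarging the base to its algebraic closure is the delicate point, though in the stable context this is cleanly handled by the stationarity of types over algebraically closed sets.
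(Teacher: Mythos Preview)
The paper does not supply a proof of this statement: it is recorded as a \emph{Fact}, part of a short list of standard properties of forking independence quoted from stability theory (with a pointer to \cite{PerinSklinosForking} for more background). So there is no ``paper's own proof'' to compare against.

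Your argument is a correct and standard derivation of the fact from the basic axioms of forking. Two small remarks. First, in your final chain you silently use that one may freely add the base to either side before invoking your second reduction: strictly, from $\bar{a}\forkindep_{\acl(A)} B$ you pass to $\bar{a}\,\acl(A)\forkindep_{\acl(A)} B\,\acl(A)$ and only then apply the closure-on-both-sides step with base $\acl(A)$; this is harmless but worth making explicit. Second, note that in this paper $\acl$ is understood as $\acl^{eq}$, so your argument should be read inside $T^{eq}$; nothing in your reasoning is sensitive to this.
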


In the above fact and for the rest of the article the algebraic closure $acl$ is 
understood as the imaginary algebraic closure $acl^{eq}$, that is, it is the algebraic closure in the extended theory $T^{eq}$, where we have added one sort for each definable equivalence relation, so that equivalence classes can be thought of as real elements. See \cite[page 10]{PillayStability} for more details.

\begin{definition} Let $A \subseteq B$ be sets of parameter, and let $p \in S_n(A)$. A \textbf{non forking extension} of $p$ over $B$ is a complete type $q \in S_n(B)$ containing $p$ such that no formula $\phi(\bar{x},b) \in q$ forks over $A$. A type $p$ over $A$ is \textbf{stationary} if for any $A\subset B$, the type $p$ admits a unique non-forking extension over $B$.  
\end{definition}

\begin{fact}\label{AlgStat}
Every type over an algebraically closed set $A$, i.e. $A=acl(A)$, is stationary.
\end{fact}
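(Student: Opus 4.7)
The plan is to deduce stationarity from the definability of types, the cornerstone of stability theory. Recall that in a stable $T$, every complete type $p$ over any set $B$ admits a defining schema: for each formula $\phi(x,y)$ there is a formula $d_p\phi(y)$ with parameters such that $\phi(x,b) \in p$ if and only if $\models d_p\phi(b)$. The refinement needed here is that a type $q \in S(B)$ is a non-forking extension of $q\upharpoonright A$ if and only if it admits a defining schema whose parameters lie in $\acl^{eq}(A)$.

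Granted this apparatus, I would first handle existence. Taking any $p \in S(A)$ and any $B \supseteq A$, I would extend $p$ arbitrarily to a complete type $p'$ over $\acl^{eq}(A)$, then use its defining schema to produce $q \in S(B)$ whose $\phi$-type over each tuple in $B$ matches the schema. By construction $q$ restricts to $p$ and is definable over $\acl^{eq}(A)$, so it does not fork over $A$.

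For uniqueness, which is the actual content of the statement, I would take two non-forking extensions $q_1, q_2 \in S(B)$ of $p \in S(A)$. Each admits a defining schema whose parameters lie in $\acl^{eq}(A) = A$ by hypothesis. Two types over $B$ with the same defining schema must coincide, so it suffices to show that the schemata agree. But since the parameters of each schema already lie in $A$, each schema is entirely determined by the restriction $q_i \upharpoonright A$; these restrictions both equal $p$, so the schemata coincide and $q_1 = q_2$.

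The substantive obstacle, if one were to develop this from the ground up, is the definability of types theorem together with its refinement characterizing non-forking extensions as those definable over $\acl^{eq}(A)$; these form the technical heart of stability theory and can be invoked as standard machinery for the purposes of this paper.
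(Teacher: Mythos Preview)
The paper does not prove this fact at all; it is simply recorded as standard stability-theoretic background, so there is no ``paper's proof'' to compare against. That said, your sketch contains a genuine gap in the uniqueness step.

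You assert that ``since the parameters of each schema already lie in $A$, each schema is entirely determined by the restriction $q_i \upharpoonright A$''. This is the crux, and it is not justified. Knowing $q_i \upharpoonright A$ tells you only that $\models d_{q_i}\phi(a)$ iff $\phi(x,a) \in p$ for $a \in A$; it does \emph{not} pin down the formula $d_{q_i}\phi(y)$ up to equivalence in the monster model. Two inequivalent $A$-formulas can perfectly well agree on all tuples from $A$ unless $A$ is an elementary submodel, which is a far stronger hypothesis than $A = \acl^{eq}(A)$. So as written, your argument proves stationarity over models, not over algebraically closed sets.

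What is missing is precisely the extra ingredient that makes algebraic closure suffice: either the finite equivalence relation theorem (two non-forking extensions are separated by an $A$-definable finite equivalence relation, whose classes then lie in $\acl^{eq}(A)=A$ and are hence already decided by $p$), or the conjugacy of non-forking extensions under $\Aut(\mathbb{M}/A)$ together with the observation that such automorphisms fix the canonical parameters of the defining schema when those parameters lie in $\acl^{eq}(A)=A$. Either route closes the gap; your current text invokes neither and instead assumes the conclusion.
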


The following lemma is \cite[Lemma 2.10]{PerinSklinosForking}.
\begin{lemma}\label{AtoFork}
Let $\mathcal{M}\models T$. Let $b,A\subset \mathcal{M}$, 
and $\mathcal{M}$ be countable and atomic over $A$. Suppose that the set $X\subset \mathcal{M}$ defined by a formula $\phi(\bar{x},b)$ contains 
a non empty almost $A$-invariant subset (i.e. a subset that has finitely many images under $Aut(\mathcal{M}/A)$). 
Then $\phi(\bar{x},b)$ does not fork over $A$.   
\end{lemma}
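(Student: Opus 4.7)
I would argue by contrapositive: assuming $\phi(\bar x, b)$ forks over $A$, I will use the nonempty almost $A$-invariant subset $Y \subseteq X$ to contradict the witness of dividing. The overall idea is that almost $A$-invariance forces many translates of $Y$ to coincide, so the corresponding translates of $X$ cannot be almost disjoint.

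First, since $T$ is stable, forking coincides with dividing, so there exist an integer $n$ and a sequence $(b_i)_{i<\omega}$ in the monster $\mathbb{M}$ with $b_i \equiv_A b$ and $\{\phi(\bar x, b_i) : i < \omega\}$ $n$-inconsistent. Because $\mathcal{M}$ is atomic over $A$, there is a formula $\psi(\bar y) \in \tp(b/A)$ isolating it; then the statement ``there exist $\bar y_1, \ldots, \bar y_k$ satisfying $\psi$ such that any $n$ among $\{\phi(\bar x, \bar y_j) : 1 \leq j \leq k\}$ are jointly unsatisfiable'' is first-order over $A$, and holds in $\mathbb{M}$ for every $k$. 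By $\mathcal{M} \prec \mathbb{M}$ it also holds in $\mathcal{M}$, producing $b_1, \ldots, b_k \in \mathcal{M}$ with $b_j \equiv_A b$ realizing the same $n$-inconsistency.

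Next, I invoke the standard fact that a countable $A$-atomic model is $A$-homogeneous (by a back-and-forth argument using the isolating formulas): any two tuples in $\mathcal{M}$ with the same type over $A$ lie in the same $\Aut_A(\mathcal{M})$-orbit. This yields automorphisms $\sigma_1, \ldots, \sigma_k \in \Aut_A(\mathcal{M})$ with $\sigma_j(b) = b_j$, so the sets $\sigma_j(X) = \phi(\mathcal{M}, b_j)$ still have the property that any $n$ of them intersect trivially.

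Finally, almost $A$-invariance says $\{\sigma(Y) : \sigma \in \Aut_A(\mathcal{M})\}$ has finite cardinality, say $N$. Taking $k = (n-1)N + 1$ and pigeonholing the indices over the $N$ possible values of $\sigma_j(Y)$, there exist $n$ indices $i_1 < \cdots < i_n$ with $\sigma_{i_1}(Y) = \cdots = \sigma_{i_n}(Y)$; this common set is nonempty, so $\sigma_{i_1}(X) \cap \cdots \cap \sigma_{i_n}(X)$ contains it and is nonempty, contradicting $n$-inconsistency. The main subtlety is bringing the dividing witnesses from the monster down into the specific model $\mathcal{M}$; this uses atomicity twice, once via the isolating formula $\psi$ (to phrase the finite $n$-inconsistency as an $A$-formula that descends by elementarity) and once via countable $A$-homogeneity (to realize $A$-conjugations as genuine automorphisms of $\mathcal{M}$).
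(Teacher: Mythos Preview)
The paper does not actually prove this lemma here; it is quoted verbatim as \cite[Lemma 2.10]{PerinSklinosForking} and no argument is given. So there is nothing in the present paper to compare your proof against.

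That said, your argument is correct and is the natural one: pull the dividing witnesses down from the monster to $\mathcal{M}$ by expressing finite $n$-inconsistency among realizations of the isolating formula $\psi$ as a first-order sentence over $A$ and invoking elementarity, then use countable $A$-atomic homogeneity to realize the $A$-conjugates $b_j$ by genuine automorphisms $\sigma_j$, and finally pigeonhole the finitely many images of $Y$ against the $n$-inconsistency of the $\sigma_j(X)$. One small remark: in this paper the working definition of ``$\phi(\bar x,b)$ forks over $A$'' is already literally the dividing condition (an $A$-indiscernible-free version with $\tp(b_i/A)=\tp(b/A)$ and $n$-inconsistency), so your opening reduction ``forking $=$ dividing in stable theories'' is not needed, though it is harmless.
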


Finally, we will use the following fact
\begin{fact}\label{ExtDef} 
Let $\mathcal{M}\prec\mathbb{M}$. Let $X\subset\mathbb{M}$ be a set definable over $\mathbb{M}$. 
Then $X\cap \mathcal{M}$ is definable over $\mathcal{M}$.
\end{fact}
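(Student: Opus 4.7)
The plan is to recognise Fact~\ref{ExtDef} as the classical result that in a stable theory every externally definable set (a set of the form $X \cap \mathcal{M}$ with $X$ definable in $\mathbb{M}$) is internally definable over $\mathcal{M}$. The single tool needed is definability of types in stable theories.

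First, I would write $X = \{\bar y \in \mathbb{M} : \mathbb{M} \models \phi(\bar y, \bar c)\}$ for some parameter-free formula $\phi(\bar y, \bar z)$ and some tuple $\bar c \in \mathbb{M}$. The goal then becomes producing a formula $\psi(\bar y)$ with parameters from $\mathcal{M}$ that defines $X \cap \mathcal{M}$ inside $\mathcal{M}$.

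Next, I would consider the complete type $p(\bar z) := \tp(\bar c/\mathcal{M})$ and invoke the fundamental theorem that in a stable theory every complete type over a model is definable over that model. Applied to the formula $\phi'(\bar z, \bar y) := \phi(\bar y, \bar z)$, this yields a $\phi'$-definition $\psi(\bar y)$ of $p$, that is, a formula with parameters from $\mathcal{M}$ such that for every tuple $\bar a$ in $\mathbb{M}$ one has $\phi'(\bar z, \bar a) \in p$, equivalently $\mathbb{M} \models \phi(\bar a, \bar c)$, if and only if $\mathbb{M} \models \psi(\bar a)$. Restricting to $\bar a \in \mathcal{M}$ and using $\mathcal{M} \prec \mathbb{M}$ to replace satisfaction in $\mathbb{M}$ by satisfaction in $\mathcal{M}$, one concludes that $X \cap \mathcal{M}$ is precisely the set defined by $\psi$ in $\mathcal{M}$.

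There is no serious obstacle here; the entire content of the argument is the invocation of definability of types, a foundational fact characterising stability. The only point requiring a bit of care is that, because $\mathcal{M}$ is a model (and not merely a parameter set), the $\phi'$-definition of $p$ can genuinely be taken to have parameters in $\mathcal{M}$ itself rather than in some algebraic closure, which is exactly what the stated conclusion demands.
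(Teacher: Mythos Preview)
Your argument is correct and is precisely the standard proof of this result via definability of types in stable theories. The paper itself does not supply a proof: it records Fact~\ref{ExtDef} as background from stability theory (the surrounding section explicitly defers to \cite[Section~2]{PerinSklinosForking} for details), so there is nothing to compare against beyond noting that your justification is the expected one.
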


\section{Trees} \label{TreesSec}

For this section we use the notations and framework of \cite{GuirardelLevittUltimateJSJ}.

Let $G$ be a finitely generated group. A $G$-tree is a simplicial tree $T$ together with an action of $G$ on $T$. A cyclic $G$-tree is a $G$-tree whose edge stabilizers are cyclic (possibly finite or trivial). We will only consider cyclic $G$-trees in this paper.

We say a $G$-tree $T$ is {\em minimal} if it admits no proper $G$-invariant subtree. Note that unless otherwise mentioned, the $G$-trees we will consider are \textbf{not} minimal.  

If $H$ is a finitely generated subgroup of $G$, it is a standard result that there is a unique subtree of $T$ preserved by $H$ on which the action of $H$ is minimal. We denote it by $T^{min}_H$.
\begin{definition} \label{MinimalSubgraphDef} Let $T$ be a $G$-tree, with corresponding graph of groups $\Lambda$, and let $H$ be a finitely generated subgroup of $G$. 
The minimal subgraph $\Lambda^{min}_{H}$ of $H$ in $\Lambda$ is the image of $T^{min}_H$ under the quotient map. 
\end{definition}

We recall from \cite{PerinSklinosForking} the definition of a Bass-Serre presentation:
\begin{definition} \label{BassSerrePresentationDef}
\emph{(Bass-Serre presentation)} Let $G$ be a finitely generated group, and let $T$ be a $G$-tree. Denote by $\Lambda$ the 
corresponding quotient graph of groups and by $p$ the quotient map $T \to \Lambda$. 

A Bass-Serre presentation for $\Lambda$ is a triple $(T^1, T^0, (t_e)_{e  \in E_1)})$ consisting of
\begin{itemize}
\item a subtree $T^1$ of $T$ which contains exactly one edge of $p^{-1}(e)$ for each edge $e$ of $\Lambda$;
\item a subtree $T^0$ of  $T^1$ which contains exactly one vertex of $p^{-1}(v)$ for each vertex $v$ of $\Lambda$;
\item for each edge $e \in E_1:= \{ e=uv \mid u \in T^0, v \in T^1\setminus T^0 \}$, an element $t_e$ of $G$ such that $t_e^{-1} \cdot v$ 
lies in $T^0$.
\end{itemize}
We call $t_e$ the stable letter associated to $e$. 
\end{definition}

If $A$ is a subset of $G$, a {\em $(G,A)$-tree} is a $G$-tree in which $A$ fixes a point. 
A (not necessarily simplicial) surjective equivariant 
map $d: T_1 \to T_2$ between two $(G,A)$-trees is called a {\em domination map}. It sends edges to paths in $T_2$. We then say that $T_1$ {\em dominates} $T_2$. This is equivalent to saying that any subgroup of $G$ which is elliptic in $T_1$ is also elliptic in $T_2$.
  
A surjective simplicial map $p:T_1 \to T_2$ which consists in collapsing some orbits of edges to points 
is called a {\em collapse map}. In this case, we also say that $T_1$ {\em refines} $T_2$.

\subsection{Cylinders and envelopes}

\begin{definition} Let $T$ be a $G$-tree with infinite cyclic edge stabilizers. A \textbf{cylinder} in $T$ is an equivalence class of edges under the equivalence relation given by $e \sim f$ iff $\Stab(e)$ and $\Stab(f)$ are commensurable (see Example (3) of Definition 7.1 in \cite{GuirardelLevittUltimateJSJ}).
\end{definition}
Note that cylinders are subtrees. The (setwise) stabilizer of a cylinder is the commensurator of the stabilizer of any one of its edges $e$, i.e. the set of elements $g$ such that $g\Stab(e)g^{-1} \cap \Stab(e)$ has finite index both in $\Stab(e)$ and in $g\Stab(e)g^{-1}$. 

\begin{definition} The \textbf{boundary} of a cylinder $C$ is the set $\partial C$ of vertices of $C$ which either are of valence $1$ in $T$ or belong to at least two cylinders.
\end{definition}
Note that the stabilizer of a vertex $v$ in the boundary $\partial C$ of a cylinder $C$ is infinite cyclic if and only if $v$ has valence $1$ in $T$.

Given a $G$-tree $T$ with infinite cyclic edge stabilizers, we can construct the associated {\em tree of cylinders} $T_c$ (Definition 7.2 in \cite{GuirardelLevittUltimateJSJ} or Definition 4.3 in \cite{GuirardelLevittTreeOfCylinders}). It is obtained from $T$ as follows: the vertex set is the union 
$V_0(T_c) \cup V_1(T_c)$ where $V_0(T_c)$ contains a 
vertex $w'$ for each vertex $w$ of $T$ contained in the boundary of some cylinder, and $V_1(T_c)$ contains a vertex 
$v_c$ for each cylinder $c$ of $T$. There is an edge between vertices $w'$ and $v_c$ lying in $V_0(T_c)$ and $V_1(T_c)$ respectively 
if and only if $w$ belongs to the cylinder $c$.  

We get a tree which is bipartite: every edge in the tree of cylinders joins a vertex from $V_0(T_c)$ 
to a vertex of $V_1(T_c)$. Since the action of $G$ on $T$ sends cylinders to cylinders, the tree of cylinder admits an obvious $G$ action. The stabilizer of a vertex $w' \in V_0(T_c)$ is the stabilizer of the original vertex $w$, while the stabilizer of a vertex $v_C \in V_1(T_c)$ is the setwise stabilizer of the cylinder $C$. 
In a tree of cylinders $T_c$, cylinders are subtrees of diameter exactly $2$ (star graphs) whose center belongs to $V_1(T_c)$ and is not contained in any other cylinder.

For a general group $G$, the tree of cylinders need not have infinite cyclic edge stabilizers. But if $G$ is torsion-free hyperbolic for example (which will be our case in the sequel since we deal exclusively with free groups), then the commensurator of an infinite cyclic group is itself infinite cyclic, so the tree of cylinders does have infinite cyclic edge groups. In this case, by \cite[Lemma 7.3(5)]{GuirardelLevittUltimateJSJ}, $T_c$ is its own tree of cylinders, i.e. $(T_c)_c = T_c$.

\subsection{Normalized JSJ decompositions}

We recall the following from Section 4.1 of \cite{PerinSklinosForking}.

\paragraph{Deformation space.} The {\em deformation space} of a minimal cyclic $(G,A)$-tree $T$ is the set of all minimal cyclic $(G,A)$-trees $T'$ such that $T$ dominates 
$T'$ and $T'$ dominates $T$. The tree of cylinders is an invariant of the deformation space 
\cite[Lemma 7.3(3)]{GuirardelLevittUltimateJSJ}. 

A minimal cyclic $(G,A)$-tree is {\em universally elliptic} if its edge stabilizers are elliptic 
in every minimal cyclic $(G,A)$-tree. If $T$ is a universally elliptic cyclic $(G,A)$-tree, and $T'$ is any minimal cyclic $(G,A)$-tree, 
it is easy to see that there is a tree $\hat{T}$ which refines $T$ and dominates $T'$ (see \cite[Lemma 2.8]{GuirardelLevittUltimateJSJ}). 

\paragraph{JSJ trees.} A {\em cyclic JSJ tree for $G$ relative to $A$} is a minimal universally elliptic cyclic $(G,A)$-tree which dominates 
any other minimal universally elliptic cyclic $(G,A)$-tree. All these JSJ trees belong to a common deformation space, 
that we denote $JSJ_A(G)$. Guirardel and Levitt show that if $G$ is finitely presented and $A$ is finitely generated, 
the JSJ deformation space always exists (see \cite[Corollary 2.21]{GuirardelLevittUltimateJSJ}). It is easily seen to be unique.

\paragraph{Rigid and flexible vertices.} A vertex stabilizer in a (relative) JSJ tree is said to be {\em rigid} 
if it is elliptic in any cyclic $(G,A)$-tree, and {\em flexible} if not. In the case where $G$ is a torsion-free hyperbolic group and $A$ is a finitely 
generated subgroup of $G$ with respect to which $G$ is freely indecomposable, 
the flexible vertices of a cyclic JSJ tree of $G$ with respect to $A$ are 
{\em surface type} vertices \cite[Theorem 6.6]{GuirardelLevittUltimateJSJ}, i.e. their stabilizers are fundamental groups of hyperbolic surfaces with boundary, 
any adjacent edge group is contained in a maximal boundary subgroup, and any maximal boundary 
subgroup contains either exactly one adjacent edge group, or exactly one conjugate of $A$.

\paragraph{Minimal subtrees of JSJ trees and and JSJ of subgroups.} Assume $G$ is a finitely generated group and $A$ a set in $G$. We may consider the Grushko decomposition $G = G_A * G_1 * \ldots *G_k *\F_l$ for $G$ relative to $A$, where $G_A$ is the smallest free factor of $G$ containing $A$ and $\F_l$ is a free group of rank $l$.

\begin{lemma} \label{MinSubtreeJSJ} If $T \in JSJ_A(G)$, the minimal subtree $T^{min}_{G_A}$ of $G_A$ in $T$ is in the cyclic JSJ deformation space for $G_A$ relative to $A$, and the minimal subtree $T^{min}_{G_i}$ of each factor $G_i$ for $i=1, \ldots, k$ is in the cyclic JSJ deformation space for $G_i$.
\end{lemma}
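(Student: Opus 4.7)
The plan is to compare $T^{min}_{G_A}$ with the ambient JSJ tree $T$ by a blow-up construction relating $(G_A, A)$-trees and $(G, A)$-trees. Let $T_0$ be the Bass-Serre tree of the Grushko decomposition $G = G_A * G_1 * \cdots * G_k * \F_l$, with $v_A$ the vertex fixed by $G_A$. To any cyclic $(G_A, A)$-tree $S$ we associate a cyclic $(G, A)$-tree $\tilde S$ by equivariantly replacing each $G$-translate of $v_A$ in $T_0$ by a translate of $S$, with the adjacent trivially stabilized edges of $T_0$ attached at any chosen vertex of $S$. The edge stabilizers of $\tilde S$ are either trivial (inherited from $T_0$) or come from $S$, so $\tilde S$ is cyclic. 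The key is the following fixed-point correspondence: for any non-trivial subgroup $H \leq G_A$, $H$ is elliptic in $\tilde S$ if and only if $H$ is elliptic in $S$. This rests on the classical free-product fact that distinct conjugates of the Grushko factors intersect trivially, which forces the fixed set of any non-trivial subgroup of $G_A$ in $T_0$ to reduce to $\{v_A\}$, so its fixed set in $\tilde S$ sits inside the glued-in copy of $S$.

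Using this tool, we verify the defining properties of a JSJ tree for $T^{min}_{G_A}$. Minimality and cyclic edges are immediate, and since $A$ is finitely generated it acts elliptically on $T$ and hence on the $G_A$-invariant subtree $T^{min}_{G_A}$, so this is a minimal cyclic $(G_A, A)$-tree. For universal ellipticity, let $S$ be any minimal cyclic $(G_A, A)$-tree and let $C$ be the $G_A$-stabilizer of an edge of $T^{min}_{G_A}$; then $C$ is contained in the $G$-stabilizer of the corresponding edge of $T$, which by universal ellipticity of $T$ is elliptic in the minimal $G$-subtree $\tilde S^{min}$ (a minimal cyclic $(G, A)$-tree), so $C$ is elliptic in $\tilde S$ and hence in $S$ by the fixed-point correspondence. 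For domination, let $S$ be minimal universally elliptic; then $\tilde S^{min}$ is universally elliptic as a $(G, A)$-tree, since its non-trivial edge stabilizers coming from $S$ are cyclic subgroups of $G_A$ that, for any minimal cyclic $(G, A)$-tree $T'$, are elliptic in $(T')^{min}_{G_A}$ by universal ellipticity of $S$, hence in $T'$. Since $T$ is the JSJ, $T$ dominates $\tilde S^{min}$, so any subgroup of $G_A$ elliptic in $T^{min}_{G_A}$ is elliptic in $T$, in $\tilde S$, and finally in $S$ by the correspondence.

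The statement for $T^{min}_{G_i}$ is proved by the analogous blow-up at the $G_i$-fixed vertex $v_i$ of $T_0$, with empty parameter set on the $G_i$ side; the vertex $v_A$ is left untouched so $\tilde S$ is automatically a $(G, A)$-tree. The principal obstacle is the careful handling of the blow-up: checking that $\tilde S$ is a genuinely cyclic $(G, A)$-tree, that its minimal $G$-subtree is the appropriate object to which to apply universal ellipticity of $T$, and that the fixed-point correspondence holds in both directions for non-trivial subgroups of $G_A$ (or $G_i$). The last of these is what makes the whole argument go through, and it hinges on the Grushko-type triviality of intersections of conjugates of factors, which confines the fixed sets of such subgroups in $\tilde S$ to the copy of $S$ at $v_A$ (respectively $v_i$).
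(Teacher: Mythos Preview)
Your proof is correct and takes a genuinely different route from the paper's. The paper does not work directly with $T^{min}_{G_A}$; instead it removes the trivially stabilized edges from $T$, obtains connected components $T_0, T_1, \ldots, T_r$ with stabilizers $H_i$, and first proves that each $H_i$ is freely indecomposable (so that the decomposition $G = H_0 * \cdots * H_r * \F_s$ \emph{is} the Grushko decomposition and $H_0 = G_A$). It then checks universal ellipticity of $T_i$ by an extension construction similar in spirit to your blow-up, but for domination it appeals to Proposition~4.15 of \cite{GuirardelLevittUltimateJSJ}, arguing that if some vertex stabilizer had a non-trivial relative JSJ one could properly refine $T$ within the universally elliptic class, contradicting maximality.

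Your argument is more self-contained: you verify domination directly via the fixed-point correspondence and the observation that $\tilde S^{min}$ is itself universally elliptic (because its non-trivial edge groups lie in $G_A$ and are elliptic in $(T')^{min}_{G_A}$ for any $(G,A)$-tree $T'$), so that the JSJ property of $T$ applies. This bypasses both the identification of the Grushko factors inside $T$ and the black-box use of Guirardel--Levitt's refinement proposition. The trade-off is that the paper's approach yields the extra structural byproduct that the complement of the trivially stabilized edges in $T$ already exhibits the Grushko factors, which is convenient later; your approach proves exactly the lemma and no more, but does so with less machinery.
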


\begin{proof} Suppose $T \in JSJ_A(G)$. Let $E_1(T)$ be the union of interior of trivially stabilized edges of $T$. Let $T_0, T_1, \ldots, T_r$ be representatives of the orbits of connected components of $T - E_1(T)$ where $v_A \in T_0$. Denote by $H_i$ the stabilizer of $T_i$. The group $G$ admits a decomposition as $H_0 * \ldots * H_r * \F_s$ for some $s$, and $A \leq G_0$.

Note that any refinement $T'_i$ of one of the trees $T_i$ can be extended to a refinement $T'$ of $T$, since the only edges of $T-T_i$ which are adjacent to $T_i$ are trivially stabilized. 

Show first that $T_i$ is universally elliptic among cyclic $H_i$-trees (respectively $(H_0,A)$-tree if $i=0$). If $\tilde{T}_i$ is another cyclic $H_i$-tree (respectively $(H_0,A)$-tree if $i=0$), we extend it to a cyclic $(G, A)$-tree $\tilde{T}$ by adding to the graph of group $\tilde{\Gamma}_i$ corresponding to $\tilde{T}_i$ a set of $k$ trivially stabilized edges with one endpoint in $\tilde{\Gamma}_i$ and the other endpoint with vertex group $H_j$ for $j\neq i$, and $s$ trivially stabilized loop attached to some vertex of $\Gamma_i$. Since $T$ is universally elliptic for $(G,A)$-trees, the stabilizer of an edge in $T_i$ fixes a vertex of $\tilde{T}$. Since all the edges of $\tilde{T}$ which are not in the orbit of $\tilde{T}_i$ are trivially stabilized, and $\Stab(e) \leq H_i = \Stab(\tilde{T}_i)$, it must in fact fix a vertex in $\tilde{T}_i$.

On the other hand, each $H_i$ is freely indecomposable: suppose by contradiction that $H_i$ acts minimally and non trivially on a tree $T_i^*$ with trivial edge stabilizers. Since $T_i$ is universally elliptic, there is a refinement $T'_i$ of $T_i$ which dominates $T^*_i$, obtained by replacing vertices in $T_i$ by the minimal subtree of their stabilizer in $T^*_i$ (in particular, we only add trivially stabilized edges). Note that $T'_i$ must have some trivially stabilized edges, in particular $T_i$ does not dominate $T'_i$ - there must be a vertex $v$ in $T_i$ whose stabilizer $\Stab(v)$ is not elliptic in $T'_i$. Now $T'_i$ can be extended to a proper refinement $T'$ of $T$. Note that to get $T'$ from $T$, we only added trivially stabilized edges, hence $T'$ is still universally elliptic. But $T$ cannot dominate $T'$ since $\Stab(v)$ is elliptic in $T$ and not in $T$ - this contradicts the maximality of $T$. 

Therefore the decomposition $G = H_0 * \ldots * H_r *\F_s$ of $G$ is in fact a Grushko decomposition for $G$, in particular $H_0 = G_A$ and $T^{min}_{G_A} \subseteq T_0$, $s=l$, $r=k$, and we may assume $H_i = G_i$ for $i=1, \ldots, k$. 

We now prove that the minimal subtree of $G_i=H_i$ in $T_i$ is a cyclic JSJ tree for $G_i$ (relative to $A$ if $i=0$). We already know it is universally elliptic since $T_i$ is. By Proposition 4.15 in \cite{GuirardelLevittUltimateJSJ}, it is enough to check that the JSJ of any vertex stabilizer in $T_i$ relative to the adjacent edge groups is trivial. Suppose by contradiction that some vertex $v \in T_i$ is such that $\Stab(v)$ admits a non trivial JSJ tree $T_v$ relative to the adjacent edge groups. Note that $\Stab(v)$ is not elliptic in $T_v$. By Proposition 4.15 of \cite{GuirardelLevittUltimateJSJ} refining $T$ at $v$ by $T_v$ gives a tree which should lie in the cyclic JSJ deformation space of $G$ relative to $A$ - a contradiction since in this refined tree, $\Stab(v)$ is no longer elliptic.
\end{proof}                                                                                                                                                                                                                                                                                                                        

\paragraph{Pointed tree of cylinders.} Assume $G$ is torsion-free hyperbolic and freely indecomposable with respect to $A$. Then, there is a preferred tree in the cyclic JSJ deformation space $JSJ_A(G)$ - indeed, in this case the tree of cylinders of $JSJ_A(G)$ itself lies in $JSJ_A(G)$ \cite[Theorem 9.18]{GuirardelLevittUltimateJSJ}. 

We slightly modify it to define the \textbf{pointed} cyclic JSJ tree for $G$ relative to $A$.
\begin{definition} Let $G$ be a torsion free hyperbolic group which is freely indecomposable relative to a non trivial finitely generated subgroup $A$. \textbf{The pointed cyclic JSJ tree of cylinders} for $G$ relative to $A$ is the tree $T$ defined as follows:
\begin{itemize}
\item[(i)] if $A$ is not infinite cyclic, $T$ is the tree of cylinders $T_c$ of $JSJ_A(G)$, and $v_A$ is the unique vertex of $T_c$ fixed by $A$;
\item[(ii)] if $A$ is cyclic, there is a unique edge $e=(v_a, p)$ adjacent to $v_A$, and removing its orbit from $T$ gives the tree of cylinder $T_c$ of $JSJ_A(G)$. Moreover, if $A$ stabilizes a cylinder $C$ in $T_c$, the vertex $p$ is the center of $C$.
\end{itemize}
\end{definition}

When we consider the minimal subtree of a subgroup $H$ containing $A$ in such a pointed tree, we require that $v_A$ belongs to $T^{min}_H$. 

\begin{definition} Let $T$ be a pointed cyclic JSJ tree for $G$ relative to $A$. Let $T'$ be the subtree of $T$ which lies in $JSJ_A(G)$.

A $v$ vertex of $T$ is called \textbf{rigid} if it is the base vertex $v_A$, or if it belongs to $T'$ and is rigid in $T'$. It is called \textbf{flexible} or \textbf{surface type} if it belongs to $T'$ and is flexible (surface type) in $T'$. 

It is said to be a \textbf{Z-type vertex} if it is distinct from the base vertex $v_A$ and its stabilizer is infinite cyclic.
\end{definition}

\begin{remark} Every vertex is either rigid or flexible (not both). All $Z$-type vertices, as well as the base vertex $v_A$, are rigid.
\end{remark}

\paragraph{Normalized JSJ decompositions for free groups.} We now restrict to the case where $G$ is a free group, but we drop the assumption that it is freely indecomposable relative to $A$. We define the normalized JSJ decompositions appearing in the main result:
\begin{definition} \label{NormalizedJSJDef}Let $\F$ be a finitely generated free group, let $A$ be a subset of $\F$ and let $\F_A$ be the smallest free factor of $\F$ containing $A$. A pointed cyclic JSJ tree $T$ relative to $A$ is said to be \textbf{normalized} if 
\begin{enumerate}
\item the minimal subtree $T^{min}_{\F_A}$ of $\F_A$ in $T$ is the pointed cyclic JSJ tree of cylinders of $\F_A$ relative to $A$,
\item trivially stabilized edges join a translate of the base vertex $v_A$ to a vertex which lies in a translate of $T^{min}_{\F_A}$.
\end{enumerate}
The decomposition $\Lambda$ associated to $T$ is called a normalized cyclic JSJ decomposition for $\F$ relative to $A$.
\end{definition}

\subsection{Statement of the main result}

We now give two definitions needed for the statement of the main result.
 
\begin{definition} \label{EnvelopesDef} Let $A$ be a subset of a free group $\F$ which is not contained in a proper free factor of $\F$. Let $T$ be the pointed cyclic JSJ tree of $\F$ relative to $A$, and let $T'$ be the subtree of $T$ which lies in $JSJ_A(\F)$ (i.e. the minimal subtree of $\F$ in $T$). 

Let $v$ be a rigid vertex of $T$. If $v$ is contained in $T'$, an \textbf{envelope of $v$ in $T$} is a union of edges adjacent to $v$ which meets at most one orbit of edge of each cylinder of $T'$.  If $v \not \in T'$ (so $v$ is a translate of $v_A$ and in particular has valence $1$), an envelope of $v$ consists at most of the unique edge adjacent to $v$.

Let $p: T \to \Lambda$ be the quotient map to the associated graph of groups. The image of an envelope of $v$ is a union of edges adjacent to $p(v)$, we call it an envelope of $p(v)$ in $\Lambda$.
\end{definition}

Note that the definition implies that an envelope of $p(v)$ is a star graph, i.e. no two edges in it have both endpoints in common.

\begin{figure}[ht!]
\centering
 \resizebox{\textwidth}{!}{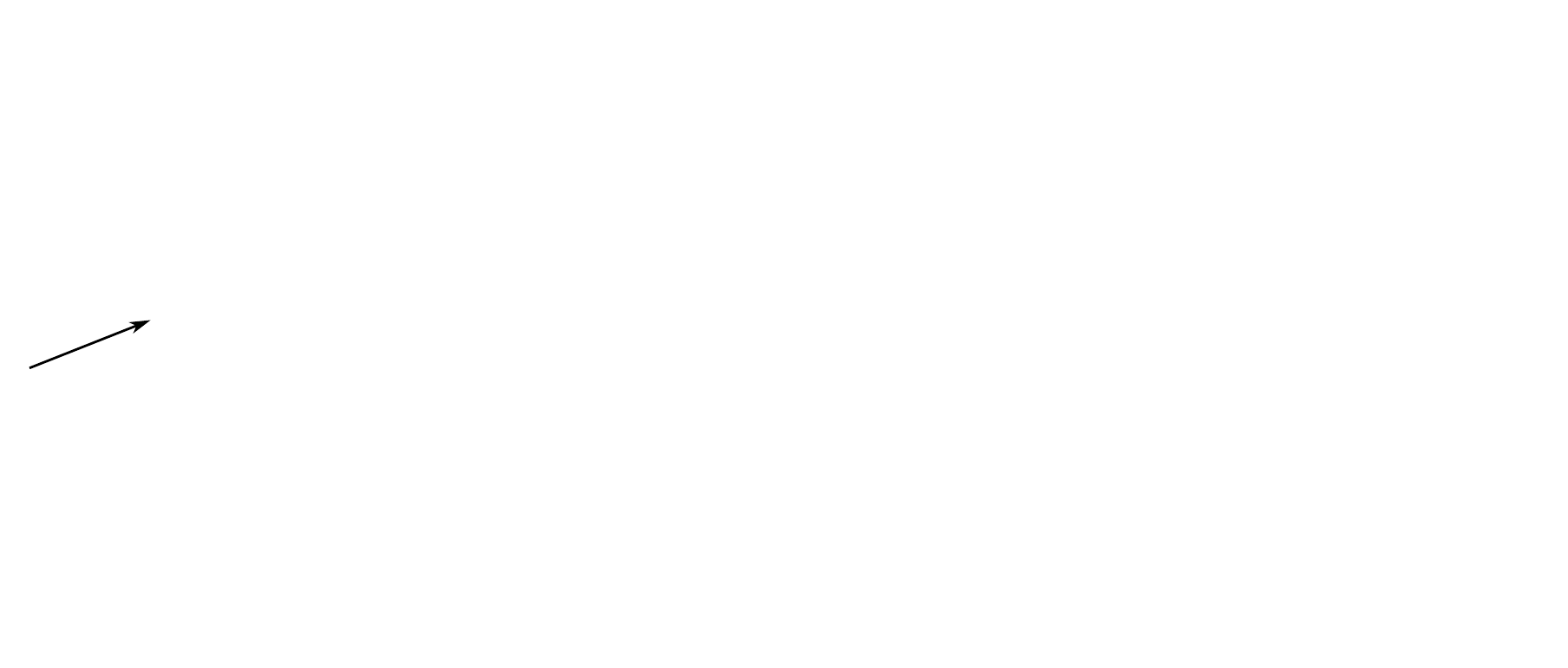}
\caption{$E$ is an envelope of $v$. Each color of edge adjacent to $v$ denotes a different orbit. Note that if we add a green edge to $E$, this is not an envelope anymore, since there exists a cylinder of $T$ which contains both red and green edges.}
\end{figure}

We generalize this to the case where $\F$ is not freely indecomposable with respect to $A$.
\begin{definition}Let $A$ be a subset of a free group $\F$, and denote by $\F_A$ the smallest free factor of $\F$ containing $A$. 
Let $T$ be a normalized pointed cyclic JSJ tree for $\F$ relative to $A$. 

Let $v$ be a rigid vertex of $T$. A subtree $E$ of $T$ is an envelope of $v$ in $T$ if $v \in E$ and there exists an element $g$ of $\F$ such that $g \cdot E$ is an envelope of $g \cdot v$ in $T^{min}_{\F_A}$. 
\end{definition}

Finally, we define the notion of blocks of a minimal subgraph of a subgroup $H$ which appears in the main result.
\begin{definition} \label{SandwichDef} Let $A$ be a subset of a free group $\F$, and denote by $\F_A$ the smallest free factor of $\F$ containing $A$.

Let $T$ be a normalized pointed cyclic JSJ tree for the free group $\F$ with respect to $A$. We say that an element $\beta$ of $\F$ is a \textbf{sandwich term} if the path $[v_A, \beta \cdot v_A]$ does not contain any translates of $v_A$ other than its endpoint, and can be subdivided into three subpaths $[v_A, u] \cup [u,v] \cup [v, g \cdot v_A]$ where $[v_A, u]$ and $[v, g \cdot v_A]$ are either empty or consist of a single trivially stabilized edge, and $[u,v]$ lies entirely in a translate of $T^{min}_{\F_A}$.

The image of the path $[u,v]$ in $\Lambda^{min}_{\F_A}$ is called the \textbf{imprint} of the sandwich term $\beta$ in $\Lambda^{min}_{\F_A}$ and denoted by $\imp(\beta)$.
The trivially stabilized edges crossed by $[v_A, \beta \cdot v_A]$ (by extension, their images in $\Lambda$) are called the trivially stabilized edges of the sandwich term $\beta$.
\end{definition}
	  
\begin{definition} \label{BlockDef} Let $A$ be a subset of a free group $\F$. Suppose $T$ is a normalized pointed cyclic JSJ tree for $\F$ relative to $A$ and denote by $\Lambda$ the graph of groups decomposition associated to $T$.

If $H$ is a subgroup $H$ of $\F$, we define ${\cal S}(H)$ to be the set of minimal subgraphs of sandwich terms $\beta$ such that some translate of $[v_A, \beta \cdot v_A]$ is contained in $T^{min}_H$. It is a collection of subgraphs of $\Lambda^{min}_H$.
   
We declare two subgraphs $\Lambda_i, \Lambda_j$ to be equivalent if their intersection contains an edge, or a vertex which is neither $Z$-type nor the base vertex $v_A$, and we consider the equivalence relation generated by this. A \textbf{block} of $\Lambda^{min}_H$  is the union of all the subgraphs of ${\cal S}(H)$ in a given equivalence class.
\end{definition}
Note that distinct blocks of $\Lambda^{min}_H$ are not necessarily disjoint, but they intersect at most in a disjoint union of $Z$-type vertices.

\begin{example} \label{ExBlocks}Suppose $\Lambda$ is a normalized JSJ decomposition of $\F$ relative to $A$ is as in Figure \ref{FigBlocks}, with rigid vertex groups $R, U, V, W$ with $A \leq R$, and a central $Z$-type vertex with vertex group $Z$, and a trivial edge with associated Bass-Serre element $t$. Let $u \in U, v\in V, w \in W$ be elements of the rigid vertex groups which do not lie in the adjacent edge group.

Suppose $c$ is the tuple $(u, t(vw)t^{-1})$. Then $\Lambda^{min}_{Ac}$ is the whole graph $\Lambda$. However, it consists of two blocks: one including the vertices with groups $R, U, Z$ and the edges between them, and the other consisting of the trivially stabilized edge, together with the edges between $V$ and $Z$ and the edge between $W$ and $Z$.
\end{example}

\begin{figure}[ht!]\label{FigBlocks}
\centering
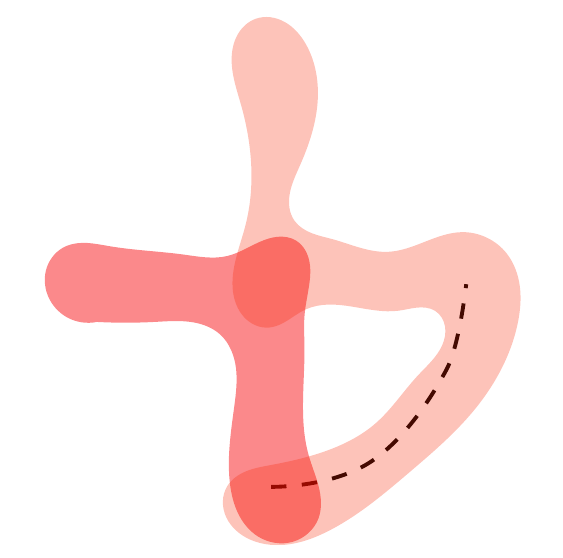
\caption{ The minimal subgraph of $(u, t(vw)t^{-1})$ has two blocks: the dark red one, which is the minimal subgraph of the sandwich term $u$, and the light red one, which is the minimal subgraph of the sandwich term $t(vw)t^{-1}$. 
(Recall that dot vertices represent rigid vertices, stars represent $Z$-type vertices, dashed edges are trivially stabilized while full ones have infinite cyclic stabilizers.)}
\end{figure}

We can now state the main result:
\begin{theorem} \label{MainResult}
	Let $A\subset \F$ be a set of parameters and $b,c$ be tuples from $\F$. Then $b$ is 
	independent from $c$ over $A$ if and only if there exists a normalized pointed cyclic $JSJ$ decomposition $\Lambda$ of $\F$ relative to $A$ in which the intersection of any two blocks of the minimal subgraphs $\Lambda^{min}_{Ab}, \Lambda^{min}_{Ac}$ is contained in a disjoint union of envelopes of rigid vertices. 
\end{theorem}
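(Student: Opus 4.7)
My plan is to prove the two directions separately, following the structure suggested in the outline. For the right-to-left direction (sufficiency) I would start from a reduced case in which $b \in \F_A$ and, after replacing $\F$ by the smallest free factor containing $Ac$, assume $\F$ is freely indecomposable relative to $Ac$. Under this assumption, the $\Aut_{Ac}(\F)$-orbit of $b$ is definable over $Ac$ by Theorem 5.3 of \cite{PerinSklinosForking}. The core step is to show that the $\Mod_{Ac}(\F)$-orbit of $b$, which sits inside that definable set, is almost $A$-invariant: given a modular automorphism $\sigma$ of $\F_A$ fixing $A$, I want to extend it to a modular automorphism $\hat\sigma$ of $\F$ that also fixes $c$. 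The hypothesis that distinct blocks of $\Lambda^{min}_{Ab}$ and $\Lambda^{min}_{Ac}$ meet only in envelopes of rigid vertices is exactly what lets me distribute the generating Dehn twists along cylinders and the vertex-group automorphisms of surface-type vertices away from the support of $c$. Once almost $A$-invariance is in hand, Lemma \ref{AtoFork} together with Fact \ref{ExtDef} and atomicity of $\F_A$ over $A$ yields $b \underset{A}{\forkindep} c$.

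For the general sufficient direction I would decompose $b$ and $c$ into sandwich terms (Definition \ref{SandwichDef}), grouped according to the trivially stabilized edges they traverse. The announced Proposition \ref{JoinTheDots} is the main intermediate step: when $b$ is a tuple of sandwich terms whose minimal subgraph is contained in a connected subgraph $\Delta$ with $\Delta \cap \Lambda_A$ connected and whose intersection with the minimal subgraph of each element of $c$ is an envelope of a rigid vertex, then $b \underset{A}{\forkindep} c$. Its proof generalizes the almost-$A$-invariance argument above, using that the Bass-Serre elements bordering a sandwich term can absorb twists and vertex-group moves in $\F_A$ that are needed to realize the $\F_A$-modular action on the imprints. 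I would then iterate this using transitivity and symmetry of forking (Fact \ref{ForkTrans}), chaining independence statements along the sandwich-term decomposition exactly as in the template of Example \ref{ForkingCalculusEx}.

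For the left-to-right direction I argue contrapositively via $\acl^{eq}$. For each tuple $b$ I would isolate canonical subgraphs $\Gamma^i_{Ab}$ of the pointed tree of cylinders of $\F_A$ relative to $A$, chosen so that orbits of their edges and non-rigid vertex stabilizers encode imaginary elements lying in $\acl^{eq}(Ab)$, and so that some normalized JSJ $\Lambda_{Ab}$ realizes $\Lambda^{min}_{Ab}$ as a union of these together with trivially stabilized edges. If no normalized JSJ satisfies the stated condition, then in every choice of $\Lambda_{Ab}, \Lambda_{Ac}$ some pair of blocks must share either an edge or a non-$Z$, non-base vertex, producing a common element of $\acl^{eq}(Ab) \cap \acl^{eq}(Ac)$ outside $\acl^{eq}(A)$, which contradicts independence by Fact \ref{ForkAlg}. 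Conversely, if $b \underset{A}{\forkindep} c$, I would form the double $\hat{\F} = \F *_{\F_A} \F'$ with a copy $c' \in \F'$ of $c$, apply the already-proved sufficient direction to deduce $b \underset{A}{\forkindep} c'$ in $\hat{\F}$, and then use stationarity (Fact \ref{AlgStat}) together with homogeneity of $\F$ as in \cite[Theorem 3.4]{PerinSklinosForking} to upgrade this to an actual decomposition $\F = \F_b * \F_A * \F_c$ with $\langle A,b\rangle \leq \F_b * \F_A$ and $\langle A,c\rangle \leq \F_A * \F_c$; gluing $\Lambda_{Ab}$ and $\Lambda_{Ac}$ along $\Lambda_A$ via this decomposition produces the required normalized JSJ.

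The principal obstacle will be Proposition \ref{JoinTheDots}: precisely controlling which modular automorphisms of $\F_A$ lift to modular automorphisms of $\F$ fixing $c$ when sandwich terms in $c$ cross trivially stabilized edges forces a delicate interaction between cylinder Dehn twists, surface-type vertex automorphisms, and Bass-Serre elements. A secondary difficulty is constructing the canonical subgraphs $\Gamma^i_{Ab}$ and proving that they determine genuine imaginary elements of $\acl^{eq}(Ab)$, since in the absence of free indecomposability the uniqueness and rigidity arguments used in \cite{PerinSklinosForking} do not apply directly and must be adapted to the normalized JSJ framework.
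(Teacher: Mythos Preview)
Your proposal follows the paper's strategy closely and is essentially correct, but two points deserve comment.

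First, your proposed proof of Proposition~\ref{JoinTheDots} --- redoing the almost-$A$-invariance argument directly, with Bass-Serre elements absorbing the twists --- differs from what the paper actually does. There each sandwich term $\beta^j$ is factored as $g_j t_{e_j}\beta^j_0 t_{e'_j}^{-1} g'_j$ with $\beta^j_0$ lying in a subgroup $H_0\leq\F_A$ whose minimal subgraph sits in $\Delta$; the already-proved special case (Proposition~\ref{IndependenceSpecialCase}) is applied to the $\F_A$-tuple $\bar\beta_0$, the free-factor independence result (Theorem~1 of \cite{PerinSklinosForking}) handles the Bass-Serre letters $t_{e_j},t_{e'_j}$ over $\emptyset$, and transitivity glues the two statements. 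This reduction is cleaner than extending the modular computation, and in particular avoids having to control surface-type automorphisms in the presence of trivially stabilized edges.

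Second, in the left-to-right direction your assertion that overlapping blocks always yield a \emph{common} element of $\acl^{eq}(Ab)\cap\acl^{eq}(Ac)$ outside $\acl^{eq}(A)$ is not quite right at surface-type vertices. When a surface vertex $v$ lies in both $\Gamma^i_{Ab}$ and $\Gamma^j_{Ac}$, Proposition~\ref{SeeAclEqInLambda} only gives conjugacy classes $[g]\in\acl^{eq}(Ab)$ and $[h]\in\acl^{eq}(Ac)$ of possibly \emph{different} non-boundary-parallel simple closed curves on the surface, and there is no reason these coincide. The paper closes this case not via a shared imaginary but by invoking (Proposition~\ref{NotInAcl}, using Theorem~2 of \cite{PerinSklinosForking}) that any two such conjugacy classes fork over $A$, which together with Fact~\ref{ForkAlg} still contradicts $b\forkindep_A c$. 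The edge-pair case does work exactly as you describe.
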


Let us give an example to illustrate the role of blocks in the result.
\begin{example} \label{ExBlocksIndep} Suppose $\Lambda$ and $c$ are as in Example \ref{ExBlocks}, and $b$ lies in $W$ but not in the adjacent edge group. Then the minimal subgraph of $Ab$ consists of the edges between the vertices corresponding to $R$ and $Z$, and to $Z$ and $W$. Thus $\Lambda^{min}_{Ab}$ intersects each one of the blocks of $\Lambda^{min}_{Ac}$ in an edge joining a rigid vertex to a $Z$-type vertex, i.e. in an envelope of a rigid vertex. Thus $b$ and $c$ are independent over $A$ (see a proof of this in Example \ref{ExBlocksProof}).

Note that the intersection of the minimal subgraphs $\Lambda^{min}_{Ab}$ and $\Lambda^{min}_{Ac}$ is not contained in a disjoint union of envelopes of rigid vertices, but the intersection of any two blocks of these subgraphs is.
\end{example}

\section{Proving independence} \label{IndependenceSec}

\subsection{A special case}

We fix a nonabelian finite rank free group $\F$.  Let $A\subset\F$ be a set of parameters and $b, c$ be 
tuples coming from $\F$. In this section we will prove one direction of Theorem \ref{MainResult} under the assumption that at least one of the tuples, 
say $b$, lives in the smallest free factor $\F_A$ of $\F$ containing $A$. 

In this case, we may assume that $\F$ is freely indecomposable relative to $Ac$, thus by Theorem 5.3 of \cite{PerinSklinosForking} the orbit $X:=Aut_{Ac}(\F).b$ 
is definable over $Ac$. But $X\subset\F_A$, thus by Fact \ref{ExtDef} the set $X$ is definable over $\F_A$.

The idea of the proof is to show that the orbit of $b$ under $\Aut_{Ac}(\F)$ contains the orbit of $b$ under $\Mod_A(\F)$, i.e. that any tuple that can be obtained from $b$ by a modular automorphism fixing $A$ can in fact be obtained by an automorphism fixing both $A$ and $c$. This proves that any set definable over $Ac$ and containing $b$ will contain an almost $A$-invariant subset, which is enough to prove independence.

We first show:
\begin{proposition} \label{ModEqual} Let $b, c$ be tuples in a free group $\F$, let $A \subseteq \F$, and denote by $\F_A$ the minimal free factor containing $A$. Assume that $\F$ is freely indecomposable with respect to $Ac$, and that $b \in \F_A$. 

Suppose that there exists a normalized pointed cyclic JSJ decomposition $\Lambda$ for $\F$ relative to $A$ in which the intersection of the minimal subgraph $\Lambda^{min}_{Ab}$ with any block of $\Lambda^{min}_{Ac}$ is contained in a disjoint union of envelopes of rigid vertices.

Then the orbit of $b$ under $\Aut_{Ac}(\F)$ contains the orbit of $b$ under $\Mod_{A}(\F_A)$.
\end{proposition}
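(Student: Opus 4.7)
The plan is to build, for every $\sigma\in\Mod_A(\F_A)$, an extension $\phi\in\Aut_{Ac}(\F)$ with $\phi|_{\F_A}=\sigma$; since $b\in\F_A$, this immediately yields $\phi(b)=\sigma(b)$, which is the claimed orbit inclusion. Because such extensions compose (if $\phi_i|_{\F_A}=\sigma_i$ and $\phi_i\in\Aut_{Ac}(\F)$ for $i=1,2$, then $(\phi_1\phi_2)|_{\F_A}=\sigma_1\sigma_2$ and $\phi_1\phi_2\in\Aut_{Ac}(\F)$), it suffices to construct $\phi$ when $\sigma$ is a generator of $\Mod_A(\F_A)$: either a Dehn twist along a cyclic edge $e$ of $T^{min}_{\F_A}$ with twisting element $z\in\Stab(e)$, or a mapping-class automorphism supported at a surface-type vertex $v_s$.

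Next, I would decompose the entries of $c$ into sandwich terms $\beta=t\alpha s$ in $\Lambda$, with $\alpha\in\F_A$ and $t,s$ trivially stabilized stable letters (possibly empty), and define a first candidate $\tilde\sigma\in\Aut_A(\F)$ by extending $\sigma$ as the identity on every trivially stabilized stable letter. Then $\tilde\sigma(\beta)=t\,\sigma(\alpha)\,s$, so $\tilde\sigma$ already fixes $\beta$ exactly when $\sigma(\alpha)=\alpha$. The strategy is to modify $\tilde\sigma$ only on stable letters, in order to obtain a $\phi$ that restricts to $\sigma$ on $\F_A$ and simultaneously fixes every entry of $c$.

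The easy subcases come first. If the support of $\sigma$ (the edge $e$ or the vertex $v_s$) is not in $\Lambda^{min}_{Ab}$, then $\sigma(b)=b$ and $\phi=\id$ works. If the support lies in $\Lambda^{min}_{Ab}$ but meets no block of $\Lambda^{min}_{Ac}$, then $\tilde\sigma$ already fixes every sandwich term of $c$, and we take $\phi=\tilde\sigma$. The critical subcase is when the support of $\sigma$ meets both $\Lambda^{min}_{Ab}$ and some block $B$ of $\Lambda^{min}_{Ac}$; by hypothesis this intersection lies in a disjoint union of envelopes of rigid vertices. In the surface case, $v_s$ is flexible and cannot be the rigid vertex of any envelope, so it can appear only as a leaf endpoint of an envelope, and the surface automorphism's effect on each traversing sandwich term reduces to conjugation by a boundary-subgroup element, which I absorb by adjusting $\phi(t)$ and $\phi(s)$. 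In the Dehn twist case, the envelope clause ``at most one orbit of edge per cylinder'' forces the cylinder of $e$ to contribute a single edge to $B$ at the given rigid vertex, so the twist's action on $\alpha$ collapses to conjugation by a fixed power $z^k$ of the twist element; this is cancelled by setting $\phi(t)=tz^{-k}$ and $\phi(s)=z^{k}s$ (with sign determined by orientation of the crossing).

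The main obstacle I expect is verifying consistency of these stable-letter corrections: a single stable letter $t$ may appear in several sandwich terms of $c$, and the adjustment $\phi(t)$ determined from each such term must agree. This is exactly the point at which the envelope condition is used in an essential way: the restriction to at most one orbit of edge per cylinder in any envelope guarantees that every sandwich term of $B$ traversing $t$ sees the same canonical twist contribution from the cylinder of $e$, so the various prescribed values of $\phi(t)$ coincide. Once this consistency is verified, $\phi$ is a bona fide automorphism of $\F$ that extends $\sigma$ and fixes both $A$ and $c$, completing the construction.
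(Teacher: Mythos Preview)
Your requirement that $\phi|_{\F_A}=\sigma$ exactly is too strong and cannot always be met, so the reduction to generators breaks down. The error is in the Dehn-twist subcase: the envelope hypothesis constrains only $\Lambda^{min}_{Ab}\cap B$, not the block $B$ of $\Lambda^{min}_{Ac}$ itself. Nothing prevents $B$ from containing several edges $e=e_0,e_1,\ldots,e_m$ of the cylinder of $e$, with only $e_0\in\Lambda^{min}_{Ab}$. A sandwich factor of $c$ whose imprint crosses some $e_j$ with $j\ge1$ is then moved by $\tau_{e_0}$ in a way that is \emph{not} a global conjugation, and no stable-letter adjustment can repair this. Concretely: take a single cylinder with center $z$ and rigid leaves $v_A,v_1,v_2$, set $b\in V_1$, attach one trivially stabilized edge from $v_A$ to $v_2$ with stable letter $t$, and let $c=t(qpq')t^{-1}$ with $p\in V_1\setminus Z$ and $q,q'\in V_2\setminus Z$. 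Here $\Lambda^{min}_{Ab}\cap B=\{e_1\}$ is an envelope of $v_1$, all hypotheses hold, yet $\tau_{e_1}(qpq')=q(\zeta p\zeta^{-1})q'$ is not conjugate to $qpq'$ in $\F_A$, so no $\phi$ with $\phi|_{\F_A}=\tau_{e_1}$ can fix $c$.

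The paper does not insist on $\phi|_{\F_A}=\sigma$. After writing $\theta\in\Mod_A(\F_A)$ as a product of elementary automorphisms supported in $T^{min}_{Ab}$, the key move is to replace each Dehn twist $\rho_i=\tau_{e_0}$ meeting a block $B$ by the product $\rho'_i$ of twists (by the same element) along \emph{all} edges $e_0,\ldots,e_m$ of that cylinder lying in $B$. Since $e_1,\ldots,e_m\notin\Lambda^{min}_{Ab}$, the extra twists do not change the image of $b$, so $\theta'=\rho'_1\cdots\rho'_r$ still satisfies $\theta'(b)=\theta(b)$; but now $\theta'$ restricts to an honest conjugation on each vertex group of the tree obtained by folding the $B$-edges of each cylinder together, and a uniform correction $\alpha(t_f)=t_f\gamma^{-1}$ on the stable letters produces $\alpha\in\Aut_{Ac}(\F)$. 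In the toy example, $\rho'=\tau_{e_1}\tau_{e_2}$ sends $qpq'$ to $\zeta(qpq')\zeta^{-1}$, and $\alpha(t)=t\zeta^{-1}$ does the job. Because $\theta'\neq\theta$ on $\F_A$, this modification is incompatible with a generator-by-generator strategy that composes exact extensions; the passage $\rho_i\mapsto\rho'_i$ is the missing idea.
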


Let us give the proof on an example first. 
\begin{example} \label{ExBlocksProof} We consider the example given in \ref{ExBlocks} and \ref{ExBlocksIndep}, where $\Lambda$ is as in Figure \ref{FigBlocks}, $c = (u, tvwt^{-1})$ and $b$ lies in $W$ and not in any adjacent edge group. Denote by $T$ the tree corresponding to $\Lambda$ and let $e_1, e_2, e_3, e_4$ be edges in $T$ adjacent to a common lift of the central $Z$-type vertex of $\Lambda$ which are lifts of the edges joining the vertices of $R$ and $Z$, $V$ and $Z$, $W$ and $Z$, and $U$ and $Z$ respectively. Denote by $\tau_i$ the Dehn twist associated to $e_i$ which restricts to a conjugation by the generator $z$ of $Z$ on $R$, $V$, $W$, and $U$ respectively and fixes pointwise all the other vertex groups.

Let $\theta \in \Mod_A(\F_A)$. By Lemmata 4.21 and 4.22 of \cite{PerinSklinosForking}, we can write
$$ \theta = \Conj(g) \circ \tau^{k_1}_1 \circ \tau^{k_2}_2 \circ \tau^{k_3}_3 \circ \tau^{k_4}_4$$
for $g = z^{-k_1}$ (since the composition fixes $R$ pointwise). Thus $\theta(b) = z^{k_3-k_1} b z^{k_1-k_3}$. We want to find an element $\theta' \in \Aut_{Ac}(\F)$ such that $\theta'(b) = \theta(b)$. 

We let $\theta'\mid_{\F_A} = \Conj(g) \circ \tau_1^{k_1} \circ \tau_2^{k_3} \circ \tau_3^{k_3} \circ  \tau^{k_1}_4$, and $\theta'(t) = tz^{k_1-k_3}$
Then clearly $\theta'(b) = z^{k_3-k_1} b z^{k_1-k_3}$. Now $\theta'(u) = \Conj(z^{-k_1})\tau_1^{k_1} (u)=u$, and $\theta'(vw) = \Conj(z^{-k_1})\tau_2^{k_3}(v) \circ \tau_3^{k_3}(w) = z^{k_3-k_1} vw z^{k_1-k_3}$ thus we get
$$\theta'(t(vw)t^{-1}) = \theta'(t) \theta'(vw) \theta'(t)^{-1} = tz^{k_1-k_3} z^{k_3-k_1}vwz^{k_1 -k_3} z^{k_3-k_1}t^{-1}$$  
so finally $\theta'(c) = c$.
\end{example}

\begin{proof} (of Proposition \ref{ModEqual})
Since $T$ is normalized, the minimal subtree $T^{min}_{\F_A}$ is the pointed cyclic JSJ tree of cylinders for $\F_A$ relative to $A$. 

By Lemmata 4.21 and 4.22 of \cite{PerinSklinosForking}, we can thus write any $\theta \in \Mod_{A}(\F_A)$ as 
$$ \theta = \Conj(z) \circ \rho_1 \circ \ldots \circ \rho_r$$
where each $\rho_i$ is an elementary automorphism, that is, a Dehn twist or a surface vertex automorphism supported on an edge or a surface vertex of the pointed JSJ tree $T^{min}_{\F_A}$. We want to show that there exists $\alpha \in \Aut_{Ac}(\F)$ such that $\theta(b) = \alpha(b)$. 

Lemma 4.21 tells us that up to changing $z$ we can permute the list of support of the $\rho_j$, and by Lemma 4.17 again up to changing $z$ we can replace $\rho_i$'s  by elementary automorphisms supported on a translate of $Supp(\rho_i)$. Note also that the composition of two elementary automorphism with the same support is again an elementary automorphism on this support.

Using all these facts, we may assume that $\rho_1, \ldots, \rho_k$ all have support in $T^{min}_{Ab}$, that $\rho_{k+1}, \ldots, \rho_r$ all have support outside any translate of $T^{min}_{Ab}$, and that different $\rho_i$'s have distinct support.

By Lemma 4.26 in \cite{PerinSklinosForking}, $\rho_{k+1} \circ \ldots \circ \rho_r$ is the composition of an element of $\Mod_{Ab}(\F_A)$ with a conjugation, hence the image of $b$ by $\theta$ is, up to conjugation, the same as the image of $b$ by $\rho_1 \circ \ldots \circ \rho_k$. Thus we may assume that $k=r$, in other words, that all the $\rho_i$ have support in $T^{min}_{Ab}$.

For each $i$, we define an automorphism $\rho'_i$ as follows: if $\Supp(\rho_i)$ does not lie in any translate of $T^{min}_{Ac}$, we set $\rho_i=\rho'_i$. If $\Supp(\rho_i)$ lies in a translate $g \cdot T^{min}_{Ac}$ of $T^{min}_{Ac}$, the hypothesis on the minimal subgraphs for $Ab$ and $Ac$ now ensures that this support is not a surface type vertex, but an edge $e_0$. Thus $\rho_i$ is a Dehn twist by some element $\gamma_i$ about $e_0$: we assume without loss of generality that it restricts to conjugation by $\gamma_i$ (and not to the identity) on the group associated to the non-Z type vertex of $e_0$. Denote by $e$ the image of $e_0$ in $\Lambda$, and by $\Gamma_e$ the block of $\Lambda^{min}_{Ac}$ containing it: again by our hypothesis on the minimal subgraphs, $e$ is the only edge of its cylinder which belongs both to $\Lambda^{min}_{Ab}$ and to $\Gamma_e$. We now let $\rho'_i$ be the product of Dehn twists by $\gamma_i$ about edges $e_0, e_1, \ldots, e_m$ which are representatives of orbits of edges of the cylinder of $e_0$ whose images lie in $\Gamma_e$ (we choose the Dehn twists to restrict to conjugation by $\gamma_i$ on the non $Z$-type vertex groups).

Note that up to conjugation, the image of $b$ by $\rho_1 \circ \ldots \circ \rho_r$ is the same as by $\theta'=\rho'_1 \circ \ldots \circ \rho'_r$, because we have only interspersed in the product some elementary automorphisms lying outside any translate of $T^{min}_{Ab}$.

Now we want to extend $\theta'$ to an automorphism $\alpha: \F \to \F$ fixing $Ac$. 

First, we choose an adequate presentation of $\F$: denote by $p: T \to T'$ the map defined by folding together in each cylinder all the edges whose image in $\Lambda$ belong to a common block of $\Lambda^{min}_{Ac}$. Denote by $\Lambda'$ the quotient graph of $T'$. Note that each $\rho'_i$ corresponds to an elementary automorphism of $T'$.

Pick a maximal subtree of $\Lambda'$ which does not include any trivially stabilized edges, lift it to a subtree $T^0$ of $T'$, and extend this to a Bass-Serre presentation $(T^0, T^1, \{t_{e}\}_{e \in E(T^1-T^0)})$ for $\Lambda'$ (recall Definition \ref{BassSerrePresentationDef}). Note that $\F = \F_A * \langle \{t_e\}_{e \in E_{triv^+}} \mid \; \rangle $ where $E_{triv^+}$ consists of all the trivially stabilized edges of $T^1$ of the form $(p(v_A), w)$ (to fix an orientation). Let $v^1, \ldots, v^s$ be the vertices of $T^0$ which come from rigid vertices of $T$ or whose inverse image by $p$ is not a single vertex, and let $H^1, \ldots, H^s$ be the stabilizers of $v^1, \ldots, v^s$. Note that each block of $(\Lambda')^{min}_{Ac}$ now consist of an envelope of the image of one of the vertices $v^i$ together with several trivially stabilized edges, and that each trivially stabilized edge is associated in this way with at most one vertex $v^i$.

Then $\theta'$ restricts to a conjugation by some element $\gamma_i$ on each $H^i$ and to the identity on the stabilizer of $p(v_A)$ in $T'$ since it is a product of elementary automorphisms associated to $T'$.

Define now $\hat{H}^i$ to be the subgroup generated by $H^i$ and all the $Z$-vertex groups adjacent to $H^i$ by a non trivially stabilized edge (so $\hat{H}^i$ is generated by $H^i$ together with some roots of elements of $H^i$). It is easy to see that $\theta'$ also restricts to a conjugation on $\hat{H}^i$.

Now each element in the tuple $c$ can be written as a product of the form
$$ g_1 (t_{e_1} h_1 t^{-1}_{f_1}) g_2 (t_{e_2} h_2 t^{-1}_{f_2}) \ldots g_m (t_{e_m} h_m t^{-1}_{f_m})g_{m+1} \; (*)$$
where for each $j$, 
\begin{enumerate}
\item $g_j \in \Stab(v_A)$ in $T'$;
\item there exists an index $l_j$ such that $e_j, f_j$ are trivially stabilized edges lying in the block of $(\Lambda')^{min}_{Ac}$ containing the image of $v^{l_j}$;
\item $h_j \in \hat{H}^{l_j}$.
\end{enumerate}

We now define $\alpha \in \Aut_{A}(\F)$ by setting $\alpha \mid_{\F_A} = \theta'$, and for each $i$ and each edge $e \in E_{triv^+}$ whose image lies in a block of $(\Lambda')^{min}_{Ac}$ containing the image of $v^i$, we set $\alpha(t_e) = t_e \gamma^{-1}_i$ (recall $\gamma_i$ is the element of $\F_A$ such that $\theta\mid_{H^i} = \Conj(\gamma_i)$). As we saw, $\F$ is the free product of $\F_A$ together with the free group generated by the elements $\{t_e\}_{e \in E_{triv^+}}$ so this indeed defines an automorphism of $\F$. 
But now it is easy to check using $(*)$ that $\alpha(c)=c$, hence $\alpha \in \Aut_{Ac}(\F)$.
\end{proof}

We can now prove:
\begin{proposition}\label{IndependenceSpecialCase}
Let $b, c$ be tuples in a free group $\F$, let $A \subseteq \F$, and denote by $\F_A$ the minimal free factor containing $A$. Assume that $\F$ is freely indecomposable with respect to $Ac$, and that $b \in \F_A$. 

Suppose that there exists a normalized   pointed cyclic JSJ decomposition $\Lambda$ for $\F$ relative to $A$ in which any two blocks of the minimal subgraphs $\Lambda^{min}_{Ab}$ and $\Lambda^{min}_{Ac}$ intersect in a disjoint union of envelopes of rigid vertices.

Then $b$ is independent from $c$ over $A$. 
\end{proposition}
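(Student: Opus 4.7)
The strategy is to show that the orbit $X := \Aut_{Ac}(\F)\cdot b$ contains a non-empty almost $A$-invariant subset of the countable atomic model $\F_A$, and then to apply Lemma \ref{AtoFork} to an $\F_A$-defining formula for $X$.

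First, since we may assume $\F$ is freely indecomposable with respect to $Ac$, Theorem 5.3 of \cite{PerinSklinosForking} provides a formula $\chi(x,c)$ over $Ac$ defining $X$ in $\F$. Every automorphism of $\F$ fixing $A$ must preserve the smallest free factor $\F_A$ containing $A$ (any free factor of $\F$ of the same rank as $\F_A$ and containing $A$ must equal $\F_A$), so $X \subseteq \F_A$. Applying Fact \ref{ExtDef} to $\F_A \prec \F$, we obtain a formula $\psi(x,d)$ with $d \in \F_A$ such that $\psi(\F_A,d) = X$.

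Next, Proposition \ref{ModEqual} gives $X \supseteq \Mod_A(\F_A)\cdot b$. Since $\F_A$ is freely indecomposable with respect to $A$, the JSJ-based description of $\Aut_A(\F_A)$ implies that $\Mod_A(\F_A)$ has finite index in $\Aut_A(\F_A)$; consequently the set $\Mod_A(\F_A)\cdot b$ has finitely many images under $\Aut_A(\F_A)$, i.e.\ it is a non-empty almost $A$-invariant subset of $\F_A$. As $A$ is not contained in any proper free factor of $\F_A$, the model $\F_A$ is countable and atomic over $A$, so Lemma \ref{AtoFork} yields that $\psi(x,d)$ does not fork over $A$.

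Finally, I would transfer this to $\chi(x,c)$. The two formulas define the same set in $\F_A$, and by saturation and homogeneity of the monster model, any putative forking witness $(c_i)$ for $\chi(x,c)$ can be lifted to $(c_id_i)$ with $\tp(c_id_i/A) = \tp(cd/A)$, giving an equivalent forking witness $(d_i)$ for $\psi(x,d)$ and a contradiction. Hence $\chi(x,c)$ does not fork over $A$. Now any $\phi(x,c) \in \tp(b/Ac)$ defines a set containing $b$, hence containing $X$ by $\Aut_{Ac}(\F)$-invariance, so $\chi(x,c) \vdash \phi(x,c)$ holds in $\F$ and therefore in the monster. Consequently no formula in $\tp(b/Ac)$ forks over $A$, which is precisely $b \forkindep_A c$.

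The main delicate point is this last transfer: Lemma \ref{AtoFork} only delivers non-forking for a formula with parameters in $\F_A$, while forking of $\tp(b/Ac)$ is witnessed by sequences of parameters that may lie outside $\F_A$. The structural content has been absorbed into Proposition \ref{ModEqual}; what remains is a careful but standard piece of forking calculus reconciling the two presentations of the definable set $X$.
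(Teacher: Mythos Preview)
Your approach matches the paper's: define $X = \Aut_{Ac}(\F)\cdot b$, note it is $Ac$-definable and contained in $\F_A$ (hence definable over $\F_A$ by Fact \ref{ExtDef}), apply Proposition \ref{ModEqual} and finite index of $\Mod_A(\F_A)$ in $\Aut_A(\F_A)$ to obtain an almost $A$-invariant subset, and conclude via Lemma \ref{AtoFork}. The paper's proof is terser but follows exactly this line; you have essentially reconstructed it, including the preamble remark invoking Fact \ref{ExtDef}.

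You are right to flag the transfer from $\psi(x,d)$ back to $\chi(x,c)$ as the only delicate point---the paper itself glosses over it---but your proposed argument for it does not work as written. From $\psi(\F_A,d) = \chi(\F,c)$ you cannot deduce $\psi(\mathbb{M},d) \subseteq \chi(\mathbb{M},c)$ (nor the reverse): $\psi(\F,d)$ may well contain points outside $\F_A$, so there is no implication between the two formulas in the monster. Consequently, lifting a dividing witness $(c_i)$ for $\chi$ to $(c_id_i)$ with $\tp(c_id_i/A)=\tp(cd/A)$ gives no reason for $\{\psi(x,d_i)\}$ to be $k$-inconsistent. A correct fix bypasses $\psi$ entirely: since $b$ lies in the atomic model $\F_A$, the type $\tp(b/A)$ is isolated by some $\theta(x)$ over $A$, and by homogeneity of $\F$ one has $\theta(\F)=\Aut_A(\F)\cdot b=\Aut_A(\F_A)\cdot b$, which is covered by finitely many $\Aut_A(\F)$-translates of $X=\chi(\F,c)$. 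Thus $\F\models \forall x\,(\theta(x)\to\bigvee_i \chi(x,\hat\sigma_i c))$, hence the same holds in $\mathbb{M}$; as $\theta$ is a consistent $A$-formula it does not fork over $A$, and this forces $\chi(x,c)$ not to fork either.
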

\begin{proof}
We assume, for the sake of 
contradiction, that $b$ forks with $c$ over $A$. 

By Theorem 5.3 of \cite{PerinSklinosForking}, since $\F$ is freely indecomposable with respect to $Ac$, the orbit $X= \Aut_{Ac}(\F) \cdot b$ of $b$ under automorphisms of $\F$ fixing $Ac$ is definable over $Ac$. Since it is contained in any set definable over $Ac$ which contains $b$, we must have that $X$ forks over $A$. 

By Proposition \ref{ModEqual}, the set $X$ contains $\Mod_A(\F_A) \cdot b$. Now $\Mod_A(\F_A)$ has finite index in $\Aut_A(\F_A)$ (see \cite[Theorem 4.4]{RipsSelaHypI}) so $\Mod_A(\F_A) \cdot b$ is a non trivial almost $A$-invariant subset. By Lemma \ref{AtoFork}, we see that $X$ cannot fork over $A$, a contradiction.
\end{proof}

\subsection{Proof of independence in the general case}

We want to prove the first direction of Theorem \ref{MainResult}, namely
\begin{theorem} \label{DisjointnessImpliesIndependence} Let $b, c$ be tuples in a free group $\F$, and let $A \subseteq \F$. Denote by $\F_A$ the minimal free factor containing $A$.
Suppose there exists a normalized JSJ decomposition $\Lambda$ for $\F$ relative to $A$ in which the intersection of any two blocks of the minimal subgraphs $\Lambda^{min}_{Ab}$ and $\Lambda^{min}_{Ac}$ is contained in a disjoint union of envelopes of rigid vertices. Then $b \forkindep_A c$.
\end{theorem}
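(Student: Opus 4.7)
My proof plan is to reduce Theorem \ref{DisjointnessImpliesIndependence} to the special case Proposition \ref{IndependenceSpecialCase} by decomposing $b$ and $c$ into sandwich terms and then assembling the resulting independence statements with forking calculus. Using a Bass-Serre presentation $(T^0, T^1, \{t_e\})$ for the normalized JSJ tree $T$, I would rewrite each entry of $b$ (respectively $c$) as a product of sandwich terms from Definition \ref{SandwichDef}. Replacing $b$ by the tuple $b^{\flat} = (\beta_1, \ldots, \beta_p)$ of its sandwich-term factors, and $c$ by an analogous tuple $c^{\flat}$, preserves algebraic closure over $A$, so by Fact \ref{ForkAlg} it is enough to prove $b^{\flat} \forkindep_A c^{\flat}$. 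The blocks of $\Lambda^{min}_{Ab}$ partition the $\beta_i$ into subtuples $b^{(1)}, \ldots, b^{(k)}$, and similarly the sandwich terms making up $c^{\flat}$ split into subtuples $c^{(1)}, \ldots, c^{(r)}$ along the blocks of $\Lambda^{min}_{Ac}$; the hypothesis translates into the condition that each $\Lambda^{min}_{A b^{(l)}}$ meets each $\Lambda^{min}_{A c^{(m)}}$ only in a disjoint union of envelopes of rigid vertices.

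The key intermediate step is a ``join-the-dots'' proposition, announced in the introduction: given tuples of sandwich terms $\hat b, \hat c$ and a subgraph $\Delta \supseteq \Lambda^{min}_{A \hat b}$ of $\Lambda$ such that $\Delta \cap \Lambda^{min}_{\F_A}$ is connected and $\Delta \cap \Lambda^{min}_{A \gamma_j}$ is contained in a disjoint union of envelopes of rigid vertices for every entry $\gamma_j$ of $\hat c$, one has $\hat b \forkindep_A \hat c$. I would prove this by adapting Propositions \ref{ModEqual} and \ref{IndependenceSpecialCase}: the preimage of $\Delta$ plays the role of the pointed cyclic JSJ of a free factor $F \leq \F$ containing $\langle A, \hat b\rangle$, and one shows that the $\Aut_{A \hat c}(\F)$-orbit of $\hat b$ contains the $\Mod_A(F)$-orbit of $\hat b$ by absorbing the Dehn twists supported on edges outside $\Delta$ into suitable modifications of the Bass-Serre stable letters of the sandwich terms of $\hat b$. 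Lemma \ref{AtoFork}, combined with almost $A$-invariance of this orbit coming from \cite[Theorem 4.4]{RipsSelaHypI}, then yields $\hat b \forkindep_A \hat c$.

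For the general case I would induct on the number $k$ of blocks of $\Lambda^{min}_{A b^{\flat}}$, combining the join-the-dots proposition with transitivity of forking (Fact \ref{ForkTrans}). Setting $A_l := A \cup \{b^{(1)}, \ldots, b^{(l-1)}\}$, I apply the proposition to $\hat b = b^{(l)}$ and $\hat c = c^{\flat}$ over $A_l$, with $\Delta$ equal to the block $B_l$ possibly enlarged by the trivially stabilized edges of $\Lambda$ bridging it to the previous blocks. Because distinct blocks of $\Lambda^{min}_{A b^{\flat}}$ meet only at $Z$-type vertices or translates of $v_A$ (both rigid), $\Delta \cap \Lambda^{min}_{\F_{A_l}}$ remains connected, and the envelope condition against $c^{\flat}$ is inherited from the original hypothesis on blocks of $\Lambda^{min}_{Ab}$ versus blocks of $\Lambda^{min}_{Ac}$. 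Iterating gives $b^{\flat} \forkindep_A c^{\flat}$, and hence the theorem. The main obstacle I anticipate is the join-the-dots proposition itself: the delicate bookkeeping needed to extend a modular automorphism of $F$ supported over $\Delta$ to an automorphism of $\F$ fixing $\hat c$, while simultaneously controlling twists along several edges and the trivially stabilized sides of the sandwich terms of $\hat b$, is the technical heart of the argument.
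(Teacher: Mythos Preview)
Your high-level architecture (decompose into sandwich terms, prove a join-the-dots proposition, assemble with transitivity) matches the paper, but two of the steps as you describe them do not go through.

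\textbf{The join-the-dots proposition.} You propose to prove it by rerunning Propositions \ref{ModEqual}--\ref{IndependenceSpecialCase} with the free factor $F$ attached to $\Delta$ in place of $\F_A$. This fails at the invocation of Lemma \ref{AtoFork}: that lemma requires the ambient model to be atomic over $A$, and the only free factor of $\F$ with this property is $\F_A$ itself. Your $F$ contains the Bass--Serre letters of the trivially stabilized edges lying in $\Delta$, hence is strictly larger than $\F_A$ whenever $\hat b\notin\F_A$, and is not atomic over $A$. (A second obstruction: the orbit $\Aut_{A\hat c}(\F)\cdot\hat b$ is only known to be definable when $\F$ is freely indecomposable relative to $A\hat c$, which you have not arranged.) The paper's proof of Proposition \ref{JoinTheDots} does not redo the modular-automorphism argument at all: it writes each sandwich term as $\beta^j=g_jt_{e_j}\beta^j_0 t_{e'_j}^{-1}g'_j$ with $\beta^j_0,g_j,g'_j\in\F_A$, applies Proposition \ref{IndependenceSpecialCase} as a black box to the tuple $\bar\beta_0\in\F_A$ to get $\bar\beta_0\forkindep_A\bar\gamma$, observes that the stable letters $t_{e_j},t_{e'_j}$ lie in a free complement to a free factor containing $\F_A\bar\gamma$ (so are independent from $\F_A\bar\gamma$ over $\emptyset$ by the free-factor case of \cite{PerinSklinosForking}), and finishes by transitivity.

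\textbf{The induction.} You apply the join-the-dots proposition over the enlarged parameter sets $A_l=A\cup b^{(1)}\cup\ldots\cup b^{(l-1)}$, but the proposition is formulated for the fixed parameter set $A$ relative to which $\Lambda$ is a normalized JSJ. For $A_l$ the minimal free factor $\F_{A_l}$ and the normalized JSJ are different objects, and your $\Delta$ is a subgraph of the wrong decomposition; the connectedness and envelope conditions you check are not the ones the proposition demands. The paper circumvents this by always applying Proposition \ref{JoinTheDots} over $A$, which forces a two-sided rather than one-sided induction: it builds a chain $\{v_A\}=\Delta_0\subset\Delta_1\subset\ldots\subset\Delta_s=\Lambda$ in which blocks of $B$ and of $C$ are absorbed \emph{alternately} (Proposition \ref{ForkingCalculus}). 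At stage $s$ one applies the proposition with $\bar\beta$ equal to \emph{everything already inside} $\Delta_{s-1}$ (a mix of $B$- and $C$-terms) and $\bar\gamma$ the new layer; this yields an independence over $A$, which transitivity then transports to the desired independence over the intermediate set. Your one-sided scheme cannot produce the statement $b^{(l)}\forkindep_A b^{(1)}\ldots b^{(l-1)}c^\flat$ in general, because connecting up the imprint of $b^{(l)}$ inside $\Lambda_{\F_A}$ may force $\Delta$ to cross a block of $c^\flat$ in more than an envelope; the alternation is exactly what lets one peel such $c$-blocks off first.
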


We first prove the result for $b,c$ sandwich terms (recall Definition \ref{SandwichDef}).
\begin{remark} Note that if $\beta$ is a sandwich term, the minimal subgraph $\Lambda^{min}_{A\beta}$ has only one block which is exactly $\Lambda^{min}_{A\beta}$.
\end{remark}

For sandwich terms, Theorem \ref{DisjointnessImpliesIndependence} holds.
\begin{proposition} \label{SandwichIndependence} If $\beta, \gamma$ are sandwich terms such that the intersection of the minimal subgraphs $\Lambda^{min}_{A\beta}$ and $\Lambda^{min}_{A\gamma}$ is contained in a disjoint union of envelopes of a rigid vertex, then $\beta$ and $\gamma$ are independent over $A$.
\end{proposition}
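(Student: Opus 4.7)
The plan is to reduce to Proposition \ref{IndependenceSpecialCase} via a case analysis based on whether the sandwich terms $\beta,\gamma$ use trivially stabilized edges of $\Lambda$.

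First I would treat the case where neither $\beta$ nor $\gamma$ uses any trivially stabilized edge, so that $\beta,\gamma\in\F_A$. Since $\Lambda$ is normalized, $\Lambda^{min}_{\F_A}$ is exactly the pointed cyclic JSJ of cylinders for $\F_A$ relative to $A$, and both $\Lambda^{min}_{A\beta}$ and $\Lambda^{min}_{A\gamma}$ are contained in it; the disjointness-up-to-envelopes hypothesis is then inherited at the level of $\F_A$. Because $\F_A$ is by definition the smallest free factor containing $A$ and $\gamma\in\F_A$, the group $\F_A$ is freely indecomposable relative to $A\gamma$. Thus Proposition \ref{IndependenceSpecialCase}, applied inside $\F_A$, delivers $\beta\forkindep_A\gamma$ in $\F_A$. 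To lift this to $\F$, I would invoke the standard fact that a non-abelian free factor of a non-abelian free group is an elementary substructure, so that forking independence transfers; the degenerate case of cyclic $\F_A$ is handled directly, since then $A$, $\beta$ and $\gamma$ all lie in a common cyclic subgroup.

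For the general case, by symmetry of forking I may assume that $\beta$ uses at least one trivially stabilized edge in its sandwich decomposition, and write $\beta=t\beta' s^{-1}$ with $t,s$ stable letters of trivially stabilized edges (possibly trivial) and $\beta'$ in a translate of $\F_A$. My plan is to build an automorphism $\psi\in\Aut_{A\gamma}(\F)$ sending $\beta$ into $\F_A$ while preserving the disjointness-up-to-envelopes hypothesis for $\psi(\beta)$ and $\gamma$; the first case then finishes (possibly after iterating if $\gamma$ also uses trivially stabilized edges). The construction exploits the free product decomposition $\F=\F_A*F_{\mathrm{triv}}$ arising from the trivially stabilized edges of $\Lambda$, combining substitutions of the form $t_i\mapsto t_i w_i$ (with $w_i\in\F_A$) with Dehn twists supported on $\Lambda^{min}_{\F_A}$. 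The key point is that the envelope hypothesis, together with the observation that any envelope of a translate of $v_A$ consists of at most a single adjacent edge, severely restricts the overlap of trivially stabilized edges appearing in the sandwich decompositions of $\beta$ and $\gamma$, which is precisely the flexibility required to move the $t_i$ involved in $\beta$ while keeping those involved in $\gamma$ fixed.

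The main technical difficulty is the construction of the automorphism $\psi\in\Aut_{A\gamma}(\F)$: one must interleave modular automorphisms of $\F_A$ with substitutions on the stable letters so as to absorb the trivial-edge factors of $\beta$, fix $A$ pointwise, and preserve $\gamma$ coordinate by coordinate. This step closely parallels the construction in the proof of Proposition \ref{ModEqual}, where elementary automorphisms of $\F_A$ are combined with substitutions on Bass--Serre stable letters to produce an automorphism of $\F$ fixing a prescribed tuple; here the target is the absorption of trivial-edge factors rather than the realization of a modular image. The iterative nature of the reduction when both $\beta$ and $\gamma$ use trivially stabilized edges requires care, but the envelope condition ensures the disjointness hypothesis persists at each step until one is reduced entirely to the first case.
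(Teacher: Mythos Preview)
Your first case is fine and essentially matches what the paper does when no trivially stabilized edges are present. The problem is in the general case: the automorphism $\psi\in\Aut_{A\gamma}(\F)$ you are hoping for simply does not exist. Since $\F_A$ is the smallest free factor of $\F$ containing $A$, any $\psi\in\Aut_A(\F)$ satisfies $\psi(\F_A)=\F_A$ (indeed $\psi(\F_A)$ is a free factor containing $A$, hence contains $\F_A$, and likewise for $\psi^{-1}$). Thus $\psi(\beta)\in\F_A$ if and only if $\beta\in\F_A$. But a sandwich term whose path $[v_A,\beta\cdot v_A]$ crosses a trivially stabilized edge cannot lie in $\F_A$: the minimal subtree $T^{min}_{\F_A}$ is the pointed tree of cylinders for $\F_A$ and contains no trivially stabilized edges, while if $\beta\in\F_A$ the geodesic $[v_A,\beta\cdot v_A]$ would lie entirely in $T^{min}_{\F_A}$. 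So your reduction step is impossible, and no amount of interleaving Dehn twists with substitutions on stable letters will fix this.

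The paper avoids this obstruction entirely: rather than trying to push $\beta$ into $\F_A$, it splits $\beta$ as $g\,t_e\,\beta_0\,t_{e'}^{-1}\,g'$ with $\beta_0,g,g'\in\F_A$ and $t_e,t_{e'}$ stable letters of trivially stabilized edges, and then uses forking calculus. The envelope hypothesis forces the trivially stabilized edges of $\beta$ to be disjoint from $\Lambda^{min}_{A\gamma}$, so there is a free decomposition $\F=\F'\ast\F''$ with $\F_A,\gamma\subset\F'$ and $t_e,t_{e'}\in\F''$; this gives $t_e t_{e'}\forkindep_\emptyset \F_A\gamma$ by the free-factor result of \cite{PerinSklinosForking}. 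Independently, Proposition~\ref{IndependenceSpecialCase} gives $(\beta_0,g,g')\forkindep_A\gamma$ since these lie in $\F_A$. Transitivity then yields $\beta\forkindep_A\gamma$. This is packaged as the more general Proposition~\ref{JoinTheDots}, from which the sandwich case follows by taking $\Delta$ to be $\Lambda^{min}_{A\beta}$ together with a path in $\Lambda^{min}_{\F_A}$ joining $v_A$ to $\imp(\beta)$. The moral is that the stable letters are dealt with by forking transitivity and the free-product theorem, not by an automorphism.
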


We will in fact prove a more general version of this proposition, which holds for tuples of sandwich terms satisfying certain conditions.

\begin{proposition} \label{JoinTheDots} Let $\F$ free group $\F$, and let $A \subseteq \F$. Denote by $\F_A$ the minimal free factor containing $A$. Let $T$ be a normalized JSJ tree for $\F$ with respect to $A$. Suppose $\bar{\beta}= (\beta^1, \ldots, \beta^q)$ and $\bar{\gamma}= (\gamma^1, \ldots, \gamma^r)$ are tuples of sandwich terms. 
	
Let $\Delta$ be a connected subgraph of groups of $\Lambda$ which contains $v_A$, and whose intersection with $\Lambda^{min}_{\F_A}$ is connected. Assume that
\begin{itemize}
\item the minimal subgraph of any element $\beta^j$ of $\bar{\beta}$ lies in $\Delta$;
\item the intersection of $\Delta$ with any block of $\Lambda^{min}_{A\bar{\gamma}}$ is contained in a disjoint union of envelopes of rigid vertices.
\end{itemize}
Then the tuples $\bar{\beta}$ and $\bar{\gamma}$ are independent over $A$.
\end{proposition}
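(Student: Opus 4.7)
The plan is to reduce Proposition \ref{JoinTheDots} to the already-established special case Proposition \ref{IndependenceSpecialCase} by enlarging the parameter set. Choose a Bass-Serre presentation of $\Lambda$, and let $\bar t=(t_1,\ldots,t_l)$ be the stable letters associated to the trivially stabilized edges of $\Delta$ and $\bar s$ the stable letters of the remaining trivially stabilized edges of $\Lambda$. By the normalization condition we have the free decomposition $\F=\F_A*\langle t_1\rangle*\cdots*\langle t_l\rangle*\langle\bar s\rangle$, so that $\F_\Delta:=\F_A*\langle\bar t\rangle$ is a free factor of $\F$. Each sandwich term $\beta^j$ whose minimal subgraph lies in $\Delta$ can be written in the form $t_e^{\epsilon}\,\alpha\,t_f^{\delta}$ with $\alpha\in\F_A$, $t_e,t_f\in\bar t\cup\{1\}$ and $\epsilon,\delta\in\{\pm 1\}$; in particular $\bar\beta\in\F_\Delta$.

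By symmetry and transitivity of forking independence (Fact \ref{ForkTrans}), the conclusion $\bar\beta\forkindep_A\bar\gamma$ reduces to proving the two statements (i)~$\bar\gamma\forkindep_A\bar t$ and (ii)~$\bar\gamma\forkindep_{A\bar t}\bar\beta$, which together give $\bar\gamma\forkindep_A\bar t\bar\beta$ and hence $\bar\beta\forkindep_A\bar\gamma$. Statement (ii) is the main body of work: we apply Proposition \ref{IndependenceSpecialCase} with parameter set $A':=A\bar t$. The minimal free factor of $\F$ containing $A'$ is precisely $\F_\Delta$, and $\bar\beta\in\F_\Delta$. We construct a normalized pointed cyclic JSJ $\Lambda'$ of $\F$ relative to $A'$ from $\Lambda$ by absorbing the trivially stabilized edges of $\Delta$ into an enlarged base vertex $v_{A'}$ (whose stabilizer will now contain $\bar t$), while keeping the trivially stabilized edges corresponding to $\bar s$ intact. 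The hypothesis that $\Delta$ meets each block of $\Lambda^{min}_{A\bar\gamma}$ only in envelopes of rigid vertices translates---after possibly passing to the minimal free factor containing $A'\bar\gamma$ so that $\F$ becomes freely indecomposable relative to $A'\bar\gamma$---to the analogous hypothesis in $\Lambda'$, and Proposition \ref{IndependenceSpecialCase} then yields (ii).

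The subtler half is (i). A key observation is that envelopes of rigid vertices are by definition contained in translates of $T^{min}_{\F_A}$ and therefore contain no trivially stabilized edge; hence the hypothesis already forces each trivially stabilized edge of $\Delta$ to be disjoint from $\Lambda^{min}_{A\bar\gamma}$, i.e.\ no $\gamma^k$ uses any stable letter in $\bar t$. This symmetry suggests swapping the roles of $\bar t$ and $\bar\gamma$: view each $t_i$ as a minimal sandwich term whose minimal subgraph is a single trivially stabilized edge, and take $\Delta'$ to be $\Lambda^{min}_{A\bar\gamma}$ augmented by a connected subtree of $\Lambda^{min}_{\F_A}$ joining the endpoints of the $e_i$'s to $v_A$; one then checks that $\Delta'$ meets each block of $\Lambda^{min}_{A\bar t}$ (each essentially a single edge) only in rigid-vertex envelopes, so that the proposition applies with $\bar\gamma$ in the role of $\bar\beta$ and $\bar t$ in the role of $\bar\gamma$. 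The whole argument is therefore organized as an induction on the total number of trivially stabilized edges involved in $\Delta\cup\bigcup_k\Lambda^{min}_{A\gamma^k}$, the base case $l=0$ reducing to the tuple version of Proposition \ref{IndependenceSpecialCase}. The main obstacle is to verify that at each step the hypothesis on blocks and envelopes survives the change of parameter set $A\mapsto A\bar t$ and the symmetric swap of roles, with a strict decrease in inductive complexity.
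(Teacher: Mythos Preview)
Your transitivity reduction to statements (i) and (ii) is the right skeleton, and your observation that the trivially stabilized edges of $\Delta$ cannot appear in $\Lambda^{min}_{A\bar\gamma}$ is exactly the key point. But both halves are then handled in a way that does not close.

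For (i), the swap-and-recurse does not terminate: in the swapped instance $\Delta'$ carries the trivially stabilized edges of $\bar\gamma$ while the new ``$\bar\gamma$'' is $\bar t$, so the total count of trivially stabilized edges in $\Delta'\cup\bigcup_i\Lambda^{min}_{At_i}$ equals the original count. There is no strict decrease. In fact no induction is needed here at all: your own observation says $\bar t$ lies in a free complement of a free factor containing $\F_A\bar\gamma$, so Theorem~1 of \cite{PerinSklinosForking} gives $\bar t\forkindep_\emptyset \F_A\bar\gamma$, hence $\bar t\forkindep_A\bar\gamma$ directly.

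For (ii), enlarging the parameter set to $A'=A\bar t$ is a detour that does not go through. Proposition~\ref{IndependenceSpecialCase} needs a \emph{normalized} JSJ relative to $A'$, and by Definition~\ref{NormalizedJSJDef} this forces the minimal subtree of $\F_{A'}=\F_A\ast\langle\bar t\rangle$ to be the pointed tree of cylinders of the cyclic JSJ of $\F_{A'}$ relative to $A\bar t$. ``Absorbing the trivially stabilized edges of $\Delta$ into an enlarged base vertex'' does not produce this object --- the JSJ of $\F_A\ast\langle\bar t\rangle$ relative to $A\bar t$ has no simple description in terms of $\Lambda$, and you give no argument that the block/envelope hypothesis transfers.

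The paper avoids both problems by never changing the parameter set. Write each $\beta^j=g_j\,t_{e_j}\,\beta^j_0\,t_{e'_j}^{-1}\,g'_j$ with $g_j,g'_j\in\Stab(v_A)$ and $\beta^j_0$ in the stabilizer $H_0$ of the connected lift of $\Delta\cap\Lambda^{min}_{\F_A}$ through $v_A$. The tuple $\bar\beta_0=(\beta^j_0,g_j,g'_j)_j$ lies in $\F_A$ and its minimal subgraph lies in $\Delta$, so Proposition~\ref{IndependenceSpecialCase} applies \emph{over $A$} to give $\bar\beta_0\forkindep_A\bar\gamma$. The free-factor independence above gives $\bar t\forkindep_\emptyset\F_A\bar\gamma$, hence $\bar t\forkindep_{A\bar\beta_0}\bar\gamma$; transitivity on $A\subseteq A\bar\beta_0\subseteq A\bar\beta_0\bar t$ then yields $\bar\beta_0\bar t\forkindep_A\bar\gamma$, and since $\bar\beta$ is a word in $\bar\beta_0,\bar t$ this finishes the proof.
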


\begin{proof} Pick a maximal subtree of $\Lambda_{\F_A}$ which extends a maximal subtree of $\Delta \cap \Lambda_{\F_A}$. Lift it to a subtree $T^0$ of $T$ which contains $v_A$, and extend this to a Bass-Serre presentation $(T^0, T^1, \{t_{e}\}_{e \in E(T^1-T^0)})$ for $\Lambda$ which contains a (connected) lift of $\Delta$, and in which the lifts of trivially stabilized edges are adjacent to $v_A$. 

Let $T_{\Delta \cap \Lambda_{\F_A}}$ be the connected component of the inverse image of $\Delta \cap \Lambda_{\F_A}$ in $T$ containing $v_A$, and let $H_0$ be the stabilizer of $T_{\Delta  \cap \Lambda_{\F_A}}$. By construction the minimal subgraph of $H_0$ is contained in $\Delta \cap \Lambda_{\F_A}$.

Now each element of the tuple $\bar{\beta} = (\beta^1, \ldots, \beta^q)$ can be written as $\beta^j = g_j t_{e_j} \beta^j_0 t^{-1}_{e'_j} g'_j$ where 
\begin{itemize}
\item $e_j, e'_j$ are trivially stabilized edges of $T^1 - T^0$ joining $v_A$ to a vertex in a translate of $T_{\Delta \cap \Lambda_{\F_A}}$,
\item $\beta^j_0 \in H_0$;
\item $g_j, g'_j \in \Stab(v_A)$.
\end{itemize}
Moreover, the edges $\{e_j, e_j' \mid j=1, \ldots, q\}$ do not lie in any translate of $T^{min}_{A \bar{\gamma}}$. In particular this means $\F$ admits a free product decomposition $\F = \F'* \F''$ where $\F_A \subseteq \F'$, $\bar{\gamma} \in \F'$ and $t_{e_j}, t_{e'_j} \in \F'' $ for all $j$. As a consequence, we get $t_{e_1} t_{e_1'} \ldots t_{e_q} t_{e_q'} \forkindep_{\emptyset} \F_A \bar{\gamma}$ by Theorem 1 of \cite{PerinSklinosForking}.

Let $\bar{\beta}_0 = (\beta^1_0, \ldots, \beta^q_0, g_1, g'_1, \ldots, g_q, g'_q)$. Since $\beta_0^j$ lies in $H_0$ for each $j$, and the minimal subgraph of $H_0$ lies in $\Delta$, we have that the minimal subgraph $\Lambda^{min}_{A \beta_0}$ lies in $\Delta$, hence its intersection with any block of $\Lambda^{min}_{A \bar{\gamma}}$ is contained in a disjoint union of envelopes of rigid vertice. 

Since $\bar{\beta}_0 \in \F_A$ we can apply Proposition \ref{IndependenceSpecialCase} to get that $\bar{\beta}_0 \forkindep_A \bar{\gamma}$. On the other hand, we have that $t_{e_1} t_{e'_1} \ldots t_{e_q} t_{e'_q} \forkindep_{\emptyset} \F_A \bar{\gamma}$ so $t_{e_1} t_{e'_1} \ldots t_{e_q} t_{e'_q}  \forkindep_{A \bar{\beta}_0} \bar{\gamma}$. Applying transitivity on $A \subseteq A\bar{\beta}_0 \subseteq A\bar{\beta}_0 t_{e_1} t_{e'_1} \ldots t_{e_q} t_{e'_q} $ and $\bar{\gamma}$, we get that $A\bar{\beta}_0 t_{e_1} t_{e'_1} \ldots t_{e_q} t_{e'_q} \forkindep_A \bar{\gamma}$, which implies $\bar{\beta} \forkindep_A \bar{\gamma}$ as wanted. 
\end{proof}

\begin{remark} Proposition \ref{SandwichIndependence} follows from Proposition \ref{JoinTheDots}. Indeed, let $P$ be a (possibly trivial) path in $\Lambda^{min}_{\F_A}$ from $v_A$ to $\imp(\beta) \cup \imp(\gamma)$ (recall that the imprint of a sandwich term $\beta$ is the image of the middle segment of $[v_A, \beta \cdot v_A]$ which lies in a translate of $T^{min}_{\F_A}$). Without loss of generality the endpoint of $P$ lies in $\imp(\beta)$ - if we let $\Delta = P \cup \Lambda^{min}_{\beta}$, the hypotheses of Proposition \ref{JoinTheDots} are satisfied and as a conclusion we get that $\beta \forkindep_A \gamma$.
\end{remark}
  
Now we will prove that if $b,c$ satisfy the hypotheses of Theorem \ref{DisjointnessImpliesIndependence}, their elements can be written as a product of sandwich terms whose minimal subgraphs form "almost disjoint" blocks.

\begin{proposition} \label{ProductOfSandwiches} Let $\F$ be a free group, let $A \subseteq \F$ and let $b, c$ be tuples in $\F$. 

Suppose there exists a normalized cyclic pointed JSJ decomposition $\Lambda$ for $\F$ relative to $A$ in which the intersection of any two blocks of the minimal subgraphs $\Lambda^{min}_{Ab}$ and $\Lambda^{min}_{Ac}$ lies in a disjoint union of envelopes of rigid vertices.

Then there exists sets $B$ and $C$ of sandwich terms such that 
\begin{enumerate}
\item  the intersection of a block of $\Lambda^{min}_{AB}$ with a block of $\Lambda^{min}_{AC}$
is a disjoint union of envelopes of rigid vertices;
\item each element $b^i$ of $b$ (respectively $c^j$ of $c$) can be written as a product of elements of $B$ (respectively of $C$);
\end{enumerate}
\end{proposition}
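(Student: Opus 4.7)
The plan is to decompose each element of $b$ and of $c$ into sandwich pieces by reading off the path in the Bass--Serre tree $T$ of $\Lambda$. Concretely, fix an element $b^i$ of the tuple $b$ and consider the geodesic $[v_A, b^i \cdot v_A]$ in $T$. List in order the translates of $v_A$ that it visits, say $v_A = w_0^i, w_1^i, \ldots, w_{n_i}^i = b^i \cdot v_A$. Since $\Lambda$ is normalized, trivially stabilized edges in $T$ join a translate of $v_A$ to a vertex in a translate of $T^{min}_{\F_A}$, so each segment $[w_{j-1}^i, w_j^i]$ is the $(\beta_1^i \cdots \beta_{j-1}^i)$-translate of the path of a sandwich term $\beta_j^i$ (at most one trivially stabilized edge at each end and middle portion inside one translate of $T^{min}_{\F_A}$); the stabilizer ambiguity at each $w_j^i$ can be absorbed into a neighbouring factor. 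This yields $b^i = \beta_1^i \cdots \beta_{n_i}^i$. Let $B$ be the set of all these sandwich terms, and construct $C$ from $c$ identically; condition (2) then holds by construction.

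To verify (1), I first observe that $\Lambda^{min}_{AB} = \Lambda^{min}_{Ab}$ and $\Lambda^{min}_{AC} = \Lambda^{min}_{Ac}$ as subgraphs of $\Lambda$: the inclusion $\supseteq$ follows from $\langle A, B\rangle \supseteq \langle A, b\rangle$, while the reverse inclusion holds because each path $[v_A, \beta_j^i \cdot v_A]$ is an $\F$-translate of the subpath $[w_{j-1}^i, w_j^i]$ of $[v_A, b^i \cdot v_A]$, hence has the same image in $\Lambda = T/\F$. So it suffices to show that every block of $\Lambda^{min}_{AB}$ is contained in a single block of $\Lambda^{min}_{Ab}$ (and symmetrically for $C, c$): then the intersection of any two blocks on the $AB, AC$ side is contained in the intersection of the enclosing blocks on the $Ab, Ac$ side, and the latter is a disjoint union of envelopes of rigid vertices by hypothesis.

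By construction, each $\beta_j^i$ already belongs to ${\cal S}(\langle A, b\rangle)$, via the translate $(\beta_1^i \cdots \beta_{j-1}^i) \cdot [v_A, \beta_j^i \cdot v_A] \subset T^{min}_{Ab}$, so the subgraph $\Lambda^{min}_{A\beta_j^i}$ lies in a single block of $\Lambda^{min}_{Ab}$. The remaining task is to control the extra sandwich terms $\gamma \in {\cal S}(\langle A, B\rangle) \setminus {\cal S}(\langle A, b\rangle)$ that could bridge previously distinct blocks. Such a $\gamma$ corresponds to a path between two translates $g \cdot v_A, g' \cdot v_A$ lying in $T^{min}_{AB}$ but not in $T^{min}_{Ab}$; the key point is that any such path is obtained by concatenating pieces that are $\F$-translates of the chosen $[v_A, \beta_j^i \cdot v_A]$, so its image in $\Lambda$ is covered by images of sandwich terms coming from a single original $b^i$ and stays inside one block of $\Lambda^{min}_{Ab}$.

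The main obstacle is this last point: rigorously ruling out that extra sandwich terms in ${\cal S}(\langle A, B\rangle)$ fuse two distinct blocks of $\Lambda^{min}_{Ab}$ into a single block of $\Lambda^{min}_{AB}$. This requires a careful combinatorial analysis of the new translates of $v_A$ that appear in $T^{min}_{AB}$ once the prefixes $\beta_1^i \cdots \beta_{j-1}^i$ are inverted, and crucially uses the normalization hypothesis to keep the trivially stabilized edges along $\gamma$'s path in a controlled relationship with the $T^{min}_{\F_A}$-translates in which the rest of the path lives.
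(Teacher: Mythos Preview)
Your decomposition of each $b^i$ into sandwich terms by cutting the geodesic $[v_A, b^i \cdot v_A]$ at the translates of $v_A$ it meets is exactly the paper's construction, and condition (2) follows immediately as you say. For condition (1) the paper is far more laconic than you: having observed that each resulting $\beta$ has $[v_A,\beta\cdot v_A]$ equal to a translate of a subpath of $T^{min}_{Ab}$, so that $\beta\in{\cal S}(\langle A,b\rangle)$ and $\Lambda^{min}_{\beta}$ already sits inside a single block of $\Lambda^{min}_{Ab}$, and after the symmetric statement for $c$, the paper simply writes ``This proves the claim.''

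In particular, the paper does not distinguish blocks of $\Lambda^{min}_{AB}$ from blocks of $\Lambda^{min}_{Ab}$ and never addresses the possibility you single out as the main obstacle --- that additional sandwich terms in ${\cal S}(\langle A,B\rangle)\setminus{\cal S}(\langle A,b\rangle)$ could fuse two blocks of $\Lambda^{min}_{Ab}$ into one block of $\Lambda^{min}_{AB}$. So you are not missing an argument that the paper supplies; on the contrary, you are being more scrupulous about what condition (1) literally asserts than the paper's own proof is. Your observation that $\Lambda^{min}_{AB}=\Lambda^{min}_{Ab}$ as subgraphs, and that every $\beta\in B$ already belongs to ${\cal S}(\langle A,b\rangle)$, is exactly the content of the paper's proof; the residual worry you flag is one the paper treats as immediate.
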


\begin{proof} Let $b^j$ be an element of the tuple $b$. Consider the path $[v_A, b^j \cdot v_A]$ - it lies in $T^{min}_{Ab}$. We subdivide it into finitely many subpaths with endpoints the translates of $v_A$ that appear in $[v_A, b^j \cdot v_A]$. Each of these subpaths is thus of the form $[g \cdot v_A, h\cdot v_A]$ and contains at most two trivially stabilized edges which must appear as the first or the last edge of the path (because any trivially stabilized edge in $T$ is adjacent to a translate of $v_A$). This means precisely that $\beta = g^{-1}h$ is a sandwich term, and moreover $[v_A, \beta \cdot v_A]$ lies in a translate of $T^{min}_{Ab}$, thus $\Lambda^{min}_{\beta}$ lies in a block of $\Lambda^{min}_{Ab}$. By construction, $b^j$ is a product of such terms.
	
Similarly, each term in the tuple $c$ can be written as a product of sandwich terms $\gamma$ such that $\Lambda^{min}_{\gamma}$ lies in a block of $\Lambda^{min}_{Ac}$. This proves the claim.
\end{proof}

To prove Theorem \ref{DisjointnessImpliesIndependence}, the idea is to divide up the sets of sandwich terms $B$ and $C$ obtained above so that their minimal subgraphs are alternatively contained in a growing chain of connected subgraphs of $\Lambda$, and then to use Proposition \ref{JoinTheDots} inductively to prove the result. Let us see how this works on an example.

\begin{example} \label{ForkingCalculusEx} Suppose $\Lambda$ is as in Figure \ref{FigForkingCalculusEx}. Suppose $b = (t_4u_4t^{-1}_4)(t_3u_3u_2t^{-1}_2)$ and $c = (s_2v_2s^{-1}_2)(s_3v_3v_4v_3 s^{-1}_3)$ where $u_i, v_i$ are elements of the vertex group $U_i$, and $t_i$ (respectively $s_i$) is a Bass-Serre element corresponding to the trivially stabilized edge on the left-hand side (respectively right-hand side) joining the base vertex to the vertex stabilized by $U_i$. Intersections of blocks of the minimal subgraphs of $Ab, Ac$ are disjoint union of vertices stabilized by the $U_i$'s.

\begin{figure}[ht!]\label{FigForkingCalculusEx} 
\centering
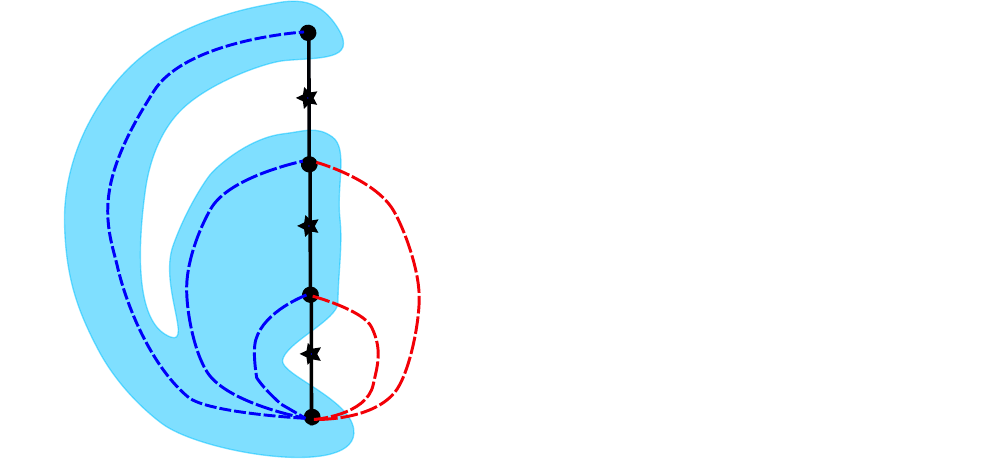
\caption{The JSJ decomposition in example \ref{ForkingCalculusEx}. (Recall that dot vertices represent rigid vertices, stars represent $Z$-type vertices, dashed edges are trivially stabilized while full ones have infinite cyclic stabilizers.) }
\end{figure}

Decompose $b$ and $c$ as products of sandwich terms, as $b = \beta_1\beta_2$ and $\gamma=\gamma_1\gamma_2$ where $\beta_1=t_4u_4t^{-1}_4, \beta_2=t_3u_3u_2t^{-1}_2$ and $\gamma_1 =  s_2v_2s^{-1}_2, \gamma_2 = s_3v_3v_4v_3 s^{-1}_3$. 

Applying Proposition \ref{JoinTheDots} to the subgraph $\Delta$ which consists of the vertices stabilized by $U_1, U_2$ the path in $\Lambda_A$ between them, and the trivially stabilized edge associated to $s_2$, we get that $\beta_2 \forkindep_A \gamma_1$. Applying it in the case where $\Delta$ is the path in $\Lambda_A$ joining the vertices stabilized by $U_3$ and $U_1$, together with the edges labelled by $s_2, t_2$ and $t_3$, we get that $\gamma_2 \forkindep_A \gamma_1\beta_2$ so in particular $\gamma_2 \forkindep_{A \gamma_1} \beta_2$. Thus by transitivity (with $A \subseteq A\gamma_1 \subseteq A\gamma_1\gamma_2$) we get that $\beta_2 \forkindep_A A\gamma_1\gamma_2$.

We can then apply again Proposition \ref{JoinTheDots} (taking $\Delta$ to be complement in $\Lambda$ of the edge labeled by $t_4$) to see that $\beta_1 \forkindep_A \gamma_1\gamma_2\beta_2$ so $\beta_1 \forkindep_{A\beta_2} \gamma_1\gamma_2$. So applying transitivity with $A \subseteq A \beta_2 \subseteq A\beta_1 \beta_2$ and $\gamma_1 \gamma_1$ we get $\beta_1\beta_2 \forkindep_A \gamma_1\gamma_2$, which implies $b \forkindep_A c$.
\end{example}

The next proposition generalizes this example to prove what can be obtained from an inductive application of \ref{JoinTheDots}. 
\begin{proposition} \label{ForkingCalculus} Suppose $\Delta_0=\{v_A\} \subseteq \Delta_1 \subseteq \ldots \subseteq \Delta_s=\Lambda$ is a chain of connected subgraphs of groups of $\Lambda$. Let $B,C$ be sets of sandwich terms such that no trivially stabilized edge is associated both to an element $\beta \in B$ and an element $\gamma \in C$.

For any $\beta_0 \in B$ (respectively $\gamma_0 \in C$), denote by $\Block_B(\beta_0)$ (respectively $\Block_C(\gamma_0)$) the block of $\Lambda^{min}_{AB}$ (respectively $\Lambda^{min}_{AC}$) containing $\Lambda^{min}_{\beta_0}$ (respectively $\Lambda^{min}_{\gamma_0}$).
	
Suppose $B, C$ can be partitioned as  $B = \bigsqcup^{\lfloor (s-1)/2 \rfloor}_{i=0} B_i$ and $C = \bigsqcup ^{\lfloor s/2 \rfloor}_{i=0} C_i$ in such a way that
\begin{itemize}
\item any two terms $\beta, \beta' \in B$ (respectively $\gamma,\gamma'$) such that $\Block_B(\beta) = \Block_B(\beta')$ (respectively $\Block_C(\gamma) = \Block_C(\gamma')$) are in the same subset $B_i$ (respectively $C_i$);
\item  for each $i$, and each $\beta \in B_i$, $\Block_B(\beta)$ lies in $\Delta_{2i+1}$, and its intersection with $\Delta_{2i}$ is a disjoint union of envelopes of rigid vertices;
\item for each $i$, and each $\gamma \in C_i$, $\Block_C(\gamma)$ lies in $\Delta_{2i}$, and its intersection with $\Delta_{2i-1}$ is a disjoint union of envelopes of rigid vertices.
\end{itemize}
Then $B \forkindep_A C$.
\end{proposition}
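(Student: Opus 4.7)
The plan is to prove the proposition by induction on $s$, peeling off the outermost layer at each step using Proposition~\ref{JoinTheDots} and combining the results via forking transitivity (Fact~\ref{ForkTrans}), closely following the template of Example~\ref{ForkingCalculusEx}.

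The base case $s = 0$ is vacuous: $\Delta_0 = \{v_A\} = \Lambda$, so no sandwich term has a nonempty imprint, and $B \forkindep_A C$ holds trivially. For the inductive step, suppose without loss of generality that $s = 2k+1$ is odd (the even case is symmetric with the roles of $B$ and $C$ swapped). The outermost layer is then $B_k$, whose blocks lie in $\Delta_s = \Lambda$ and meet $\Delta_{2k}$ in a disjoint union of envelopes of rigid vertices. Set $B' = B \setminus B_k$. The first task is to apply Proposition~\ref{JoinTheDots} with $\bar\beta$ listing the elements of $B' \cup C$, $\bar\gamma$ listing the elements of $B_k$, and $\Delta := \Delta_{2k}$. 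The block conditions on $B_i$ for $i<k$ and on $C_i$ for $i \leq k$ force the minimal subgraphs of each term of $\bar\beta$ to lie inside $\Delta_{2k}$; moreover any block of $\Lambda^{min}_{AB_k}$ is contained in a block of $\Lambda^{min}_{AB}$ containing a subgraph from $B_k$, whose intersection with $\Delta_{2k}$ is a disjoint union of envelopes of rigid vertices by hypothesis. The partition hypothesis also prevents trivially stabilized edges from being shared between $B_k$ and $B' \cup C$, since otherwise two subgraphs from distinct $B_i$'s would lie in a common block of $\Lambda^{min}_{AB}$. We conclude $B' C \forkindep_A B_k$, which by transitivity and symmetry gives $C \forkindep_{AB'} B_k$.

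Next, apply the inductive hypothesis to the shorter chain $\Delta_0 \subseteq \dots \subseteq \Delta_{2k}$ with the induced partitions $B' = \bigsqcup_{i<k} B_i$ and $C = \bigsqcup_{i \leq k} C_i$. All the block conditions carry over verbatim, and the role of $\Lambda$ at the top of the chain is played here by $\Delta_{2k}$ (the hypothesis $\Delta_s = \Lambda$ really only serves to ensure that $\Delta_s$ contains every block appearing in $B' \cup C$, which remains true). This yields $B' \forkindep_A C$, equivalently $C \forkindep_A B'$. Transitivity along $A \subseteq AB' \subseteq AB$ applied to the tuple $C$ then combines this with the previous step to give $C \forkindep_A B$, and by symmetry $B \forkindep_A C$. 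The main technical obstacle is the careful verification of the hypotheses of Proposition~\ref{JoinTheDots}, particularly that blocks of $\Lambda^{min}_{AB_k}$ sit inside well-controlled blocks of $\Lambda^{min}_{AB}$, and that the connectedness of $\Delta_{2k} \cap \Lambda^{min}_{\F_A}$ is preserved at each step of the induction.
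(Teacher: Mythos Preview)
Your argument is essentially the paper's own: induction on the length of the chain, peel off the outermost layer using Proposition~\ref{JoinTheDots} with $\Delta=\Delta_{2k}$, and combine with the inductive hypothesis via transitivity. The only cosmetic difference is that you shorten the chain by dropping the top $\Delta_{2k+1}$ (and argue that $\Delta_s=\Lambda$ is not really needed), whereas the paper keeps $\Lambda$ on top and instead skips $\Delta_{2k}$, applying induction to $\Delta_0\subseteq\ldots\subseteq\Delta_{2k-1}\subseteq\Lambda$; both work for the same reason.
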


\begin{proof} We prove this by induction on the length of the chain of subgraphs of groups. If $s=1$, we have that the minimal subgraph of any term in $C$ is contained in $\{v_A\}$, hence applying Proposition \ref{JoinTheDots} to $\Delta = \{v_A\}$ gives the result.

Suppose it holds for chains of length at most $s$, and let us prove it for a chain $\Delta_0=\{v_A\} \subseteq \Delta_1 \subseteq \ldots \subseteq \Delta_s \subseteq \Delta_{s+1}=\Lambda_{\F_A}$.

We prove this for $s = 2k$, the odd case is very similar. The hypotheses give us partitions $B = \bigsqcup^{k}_{i=0} B_i$ and $C = \bigsqcup^{k}_{i=0} C_i$. We have
\begin{enumerate}
\item $B_0 \ldots B_{k-1}B_{k} \forkindep_{A B_0\ldots B_{k-1}} C_0\ldots C_{k}$. Indeed, the subgraph of groups $\Delta_{s}$ satisfies that the minimal subgraphs of all the terms in $B_0\ldots B_{k-1}C_0\ldots C_k$ lie in $\Delta_{s}$, and any block of $\Lambda^{min}_{AB_k}$ intersects $\Delta_{s}$ in a disjoint union of envelopes of rigid vertices, so the hypotheses of Proposition \ref{JoinTheDots} are satisfied: we get that $B_0\ldots B_{k-1}C_0\ldots C_{k-1} C_k \forkindep_A B_{k}$ so by transitivity $C_0\ldots C_k \forkindep_{AB_0\ldots B_{k-1}} B_{k}$.
\item $B_0 \ldots B_{k-1} \forkindep_{A} C_0 \ldots C_{k}$. This follows from applying the induction hypothesis to the chain $\Delta_0 \subseteq \Delta_1 \subseteq \ldots \subseteq \Delta_{s-1} \subseteq \Delta_{s+1}= \Lambda_{\F_A}$ (we skip $\Delta_{s}$) and to the sets $\bigsqcup^{k-1}_{i=0} B_i$ and $C$, we get that $B_0 \ldots B_{k-1} \forkindep_A C_0 \ldots C_{k}$. 
\end{enumerate}
From 1. and 2. we get by transitivity that $B_0 \ldots B_{k} \forkindep_A C_0 \ldots C_{k}$. 
\end{proof}

We can finally prove Theorem \ref{DisjointnessImpliesIndependence}.
\begin{proof}Let $B$ and $C$ be the sets of sandwich terms given by Proposition \ref{ProductOfSandwiches}. 
	
We build a sequence of connected subgraphs  $\Delta_0 =\{v_A\} \subseteq \Delta_1 \subseteq \ldots \subseteq \Delta_r = \Lambda$ satisfying the conditions of Proposition \ref{ForkingCalculus} as follows. 

Suppose we have built $\Delta_i$: if $\Delta_i$ contains all the blocks $\Block_B(\beta)$ and $\Block_{C}(\gamma)$ for all $\beta \in B$ and $\gamma \in C$, we set $\Delta_{i+1} = \Lambda_{\F_A}$ and we are done. 

If not, consider all the blocks $\Block_B(\beta)$ which are not contained in $\Delta_i$ and such that there is a path in $\Lambda_{\F_A}$ between ${\Delta_i}$ and $\Block_B(\beta)$ which does not intersect any $\Block_C(\gamma)$ in more than a disjoint union of envelopes of rigid vertices. If such components exist, we build $\Delta_{i+1}$ by adding them to $\Delta_i$ together with the appropriate paths joining them to $\Delta_i$, and we set $B_{\lfloor i/2 \rfloor}$ to be the set of all terms of $B$ whose minimal subgraphs lies in those components we have added. If not, we consider the blocks $\Block_C(\gamma)$ which are not contained in $\Delta_i$ and for which there are paths in $\Lambda_{\F_A}$ between  ${\Delta_i}$ and $\Block_C(\gamma)$ which do not intersect any blocks $\Block_B(\beta)$ in more than a disjoint union of envelopes rigid vertices (there must be some), and we proceed similarly. 

By construction the chain $\Delta_0 =\{v_A\} \subseteq \Delta_1 \subseteq \ldots \subseteq \Delta_r = \Lambda$ and the partitions given for the sets $B$ and $C$ satisfy the hypotheses of Proposition \ref{ForkingCalculus} (up to switching between $B$ and $C$), hence $B \forkindep_A C$.
\end{proof}

\section{Proving forking} \label{ForkingSec}

In this subsection we prove the left to right direction of Theorem \ref{MainResult}, namely: 
\begin{proposition}\label{LefttoRight}
Let $A\subset \F$ be a set of parameters and $b,c$ be tuples from $\F$. Suppose that in any 
normalized cyclic $JSJ$ decomposition $\Lambda$ for $\F$ relative to $A$, there are blocks of the minimal subgraphs $\Lambda^{min}_{Ab}$ and $\Lambda^{min}_{Ac}$ whose intersection is not contained in a disjoint union of envelopes of rigid vertices. 

Then $b$ forks with $c$ over $A$.
\end{proposition}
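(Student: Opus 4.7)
The plan is to prove the contrapositive: assuming $b \forkindep_A c$, I want to construct a normalized pointed cyclic JSJ decomposition $\Lambda$ of $\F$ relative to $A$ in which every pair of blocks of $\Lambda^{min}_{Ab}$ and $\Lambda^{min}_{Ac}$ meets only in a disjoint union of envelopes of rigid vertices. The whole argument is organized around the canonical tree-of-cylinders JSJ $\Lambda_A$ of the smallest free factor $\F_A$ containing $A$, relative to $A$; individual normalized JSJ's $\Lambda_{Ab}$ and $\Lambda_{Ac}$ capture $b$ and $c$ separately, and the delicate step is to fuse them into a single one.

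First I would attach to $b$ a canonical finite family of subgraphs $\Gamma^i_{Ab}$ of $\Lambda_A$, one per block of the minimal subgraph of $Ab$, designed so that each $\Gamma^i_{Ab}$ carries (up to modular group action) invariants of $\F_A$ lying in $\acl^{eq}(Ab)$: roughly, non-rigid vertex groups meeting a block, attaching maps of surface vertices, and cyclic edge stabilizers are all recoverable from $Ab$ by bounded-index invariants and therefore land in the imaginary algebraic closure. Simultaneously I would exhibit a normalized JSJ $\Lambda_{Ab}$ for $\F$ whose minimal subgraph for $Ab$ is contained in the union of the $\Gamma^i_{Ab}$ together with the trivially stabilized edges prescribed by normalization, and symmetrically define $\Gamma^j_{Ac}$ and $\Lambda_{Ac}$. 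The key algebraic input is then Fact \ref{ForkAlg}: from $b \forkindep_A c$ we get $\acl^{eq}(Ab) \forkindep_{\acl^{eq}(A)} \acl^{eq}(Ac)$, so any shared surface-type vertex or $Z$-edge in $\Gamma^i_{Ab} \cap \Gamma^j_{Ac}$ would yield a nontrivial element of $\acl^{eq}(Ab) \cap \acl^{eq}(Ac)$ not in $\acl^{eq}(A)$, contradicting that independence. This forces each $\Gamma^i_{Ab} \cap \Gamma^j_{Ac}$ to sit inside a disjoint union of envelopes of rigid vertices of $\Lambda_A$.

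To combine the two pictures I would pass to the double $\hat{\F} = \F *_{\F_A} \F'$, where $\F'$ is a second copy of $\F$ with distinguished tuple $c'$ mirroring $c$; note $\hat{\F}$ is still a finitely generated free group since $\F_A$ is a free factor on both sides. The decompositions $\Lambda_{Ab}$ and $\Lambda_{Ac}$ share the core $\Lambda_A$ inside $\F_A$, so I would glue them into a normalized JSJ $\hat{\Lambda}$ of $\hat{\F}$ relative to $A$, in which the blocks of the minimal subgraphs for $Ab$ and $Ac'$ intersect exactly as the $\Gamma^i_{Ab} \cap \Gamma^j_{Ac}$ do, hence well. Then Theorem \ref{DisjointnessImpliesIndependence}, applied inside $\hat{\F}$, yields $b \forkindep_A c'$. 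Since $\tp(c'/A) = \tp(c/A)$ and types over $\acl^{eq}(A)$ are stationary (Fact \ref{AlgStat}), the non-forking extensions to $Ac$ and to $Ac'$ coincide; following the lines of the proof of \cite[Theorem 3.4]{PerinSklinosForking}, I would use homogeneity of $\F$ to realize this configuration inside $\F$ itself, producing a free product decomposition $\F = \F_b * \F_A * \F_c$ with $Ab \subseteq \F_b * \F_A$ and $Ac \subseteq \F_A * \F_c$.

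Given such a Grushko-type decomposition, assembling the required $\Lambda$ is essentially bookkeeping: use $\Lambda_A$ as the central $\F_A$-piece and attach the trivially stabilized edges coming from the $b$-side and the $c$-side of the free product to disjoint subtrees, reproducing the edges of $\Lambda_{Ab}$ and $\Lambda_{Ac}$ on their respective sides. Blocks of $\Lambda^{min}_{Ab}$ and $\Lambda^{min}_{Ac}$ then interact only through the already-controlled intersections $\Gamma^i_{Ab} \cap \Gamma^j_{Ac}$ inside $\Lambda_A$, which are envelopes of rigid vertices by construction. The main obstacles I expect are (i) defining the $\Gamma^i_{Ab}$ precisely enough that their geometric features truly correspond to elements of $\acl^{eq}(Ab)$, so that the envelope condition can be extracted purely from Fact \ref{ForkAlg}, and (ii) carrying out the stationarity-plus-homogeneity transfer from $\hat{\F}$ back to $\F$ cleanly, since one must promote an abstract non-forking statement in the amalgam to an honest geometric free product decomposition of $\F$ itself.
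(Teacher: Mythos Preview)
Your proposal is essentially the paper's own argument: contrapositive, construct the $\Gamma^i_{Ab}$ carrying $\acl^{eq}(Ab)$-data (Proposition \ref{SeeAclEqInLambda}), use independence to force the $\Gamma^i_{Ab}\cap\Gamma^j_{Ac}$ into envelopes (Proposition \ref{NotInAcl}), pass to the double $\hat\F=\F*_{\F_A}\F'$ and apply Theorem \ref{DisjointnessImpliesIndependence} to get $b\forkindep_A c'$, then use stationarity plus homogeneity and Kurosh to descend to a splitting $\F=\F_b*\F_A*\F_c$ and amalgamate the two JSJ's. One small sharpening: in the surface-vertex case the paper does not produce a common element of $\acl^{eq}(Ab)\cap\acl^{eq}(Ac)\setminus\acl^{eq}(A)$, but rather two simple-closed-curve classes, one in each algebraic closure, and shows directly (Proposition \ref{NotInAcl}) that these fork over $A$; your phrasing ``shared \ldots\ would yield a nontrivial element of $\acl^{eq}(Ab)\cap\acl^{eq}(Ac)$'' is slightly too optimistic there, but the correction is exactly the kind of detail you flagged under obstacle (i).
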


The following proposition shows that some elements of the algebraic closure of $Ab$ can be read off in the JSJ decomposition of the minimal free factor containing $A$.

First, we show the following fact about algebraic closures in the free group. 
\begin{proposition} Let $\F$ be a finitely generated free group, let $H \leq \F$ be a subgroup relative to which $\F$ is freely indecomposable. Let $\Lambda$ be any JSJ decomposition of $\F$ relative to $H$. Then the vertex subgroup containing $H$ is contained in $\acl(H)$, and if $v$ is a rigid vertex of $\Lambda$, the conjugacy class of any tuple of elements of a subgroup associated to $v$ lies in $\acl^{eq}(H)$.
\end{proposition}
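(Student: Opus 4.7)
The plan is to leverage the combination of two facts: first, the Rips--Sela theorem that $\Mod_H(\F)$ has finite index in $\Aut_H(\F)$ (cited in the paper as \cite[Theorem 4.4]{RipsSelaHypI}, and already used in Proposition~\ref{IndependenceSpecialCase}); second, the structural description of modular automorphisms, which states that any $\sigma \in \Mod_H(\F)$ restricts to a conjugation (by some element $g_\sigma \in \F$) on every rigid vertex subgroup of $\Lambda$. With these two ingredients, the argument for each half of the proposition is essentially the same: exhibit finiteness of the $\Mod_H(\F)$-orbit of the relevant object, then transfer to $\Aut_H(\F)$ via finite index and apply the characterization of $\acl^{eq}$ as consisting of imaginaries with finite $\Aut_H$-orbit.

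For the conjugacy-class statement, let $v$ be a rigid vertex of $\Lambda$ with stabilizer $V$ and let $\bar{w}$ be a tuple from $V$. For each $\sigma \in \Mod_H(\F)$ there exists $g_\sigma \in \F$ with $\sigma|_V = \Conj(g_\sigma)$, so $\sigma(\bar{w})$ is a conjugate of $\bar{w}$. Conjugacy being a definable equivalence relation, the class $[\bar{w}]$ is an imaginary element, and the above shows it is fixed by every element of $\Mod_H(\F)$. By finite index of $\Mod_H(\F)$ in $\Aut_H(\F)$, the $\Aut_H(\F)$-orbit of $[\bar{w}]$ is finite, so $[\bar{w}] \in \acl^{eq}(H)$.

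For the statement about the vertex subgroup $V_H$ containing $H$, take $\sigma \in \Mod_H(\F)$ and write $\sigma|_{V_H} = \Conj(g_\sigma)$. Since $\sigma$ fixes $H$ pointwise, $g_\sigma$ centralizes $H$ in $\F$. When $H$ is non-cyclic, the centralizer of $H$ in the free group $\F$ is trivial, so $\sigma|_{V_H} = \id$; thus every element of $V_H$ is fixed by all of $\Mod_H(\F)$, and the finite-index argument gives $V_H \subseteq \acl(H)$.

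The main obstacle is the case when $H$ is infinite cyclic. Then the centralizer of $H$ is an infinite cyclic group $\langle z \rangle$ containing $H$ with finite index, and the element $g_\sigma$ above need only be a power of $z$, so $\sigma|_{V_H}$ could in principle take a given $w \in V_H$ through infinitely many distinct conjugates. To handle this, I would work in the pointed JSJ (which adjoins a distinguished edge pinning down $V_H$ canonically) and use that a Dehn twist along this extra edge lies in $\Mod_H(\F)$; controlling which powers of $\Conj(z)$ can actually be realized by elements of $\Mod_H(\F)$ fixing $H$ pointwise should force only finitely many values of $g_\sigma$ modulo the stabilizer of $w$ inside $V_H$, yielding finiteness of the $\Mod_H(\F)$-orbit of $w$ and hence $w \in \acl(H)$ by the same finite-index transfer as before.
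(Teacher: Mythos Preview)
Your overall strategy matches the paper's exactly: combine the Rips--Sela finite-index result for $\Mod_H(\F) \leq \Aut_H(\F)$ with the fact that modular automorphisms restrict to conjugation on rigid vertex groups (and to the identity on the vertex group of $H$), and deduce finiteness of the relevant $\Aut_H(\F)$-orbits.

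There is, however, one genuine gap. The step ``apply the characterization of $\acl^{eq}$ as consisting of imaginaries with finite $\Aut_H$-orbit'' is not valid in an arbitrary model: that equivalence holds in sufficiently saturated models, and $\F$ is not saturated. A tuple could in principle have finite $\Aut_H(\F)$-orbit and still fail to be algebraic over $H$. The paper closes this gap right at the start of its proof by invoking Theorem~5.3 of \cite{PerinSklinosForking}: since $\F$ is freely indecomposable relative to $H$, the $\Aut_H(\F)$-orbit of any tuple is \emph{definable} over $H$ (equivalently, $\F$ is atomic over $H$). Once the orbit is both definable and finite, algebraicity is immediate; likewise, if the orbit meets only finitely many conjugacy classes, the conjugacy class lies in $\acl^{eq}(H)$. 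You need to insert this citation rather than appeal to a saturation-style characterization.

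On the cyclic-$H$ case: the paper does not treat it separately at all --- it simply asserts that elements of $\Mod_H(\F)$ fix the vertex group containing $H$ pointwise and moves on. So your more elaborate detour through the pointed JSJ and centralizer analysis has no counterpart in the paper's argument; whether it is genuinely needed depends on the precise convention for $\Mod_H(\F)$ in \cite{PerinSklinosForking}, but the paper evidently takes that fixing property as given.
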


The first part of this result was first proved in \cite{OuldHoucineVallinoAlgClosure}.
\begin{proof} By Theorem 5.3 of \cite{PerinSklinosForking}, the orbit $X:=Aut_{H}(\F).g$ of a tuple $g$ under automorphisms fixing $H$ is definable over $H$. In fact, it is contained in any set definable over $H$ containing $H$. 

Thus any tuple whose orbit under $\Aut_H(\F)$ is finite lies in $\acl(H)$, and the conjugacy class of any tuple whose orbit intersects finitely many conjugacy classes lies in $\acl^{eq}(H)$. Now a tuple lying in the vertex subgroup containing $H$ is fixed by elements of $\Mod_H(\F)$, and elements of $\Mod_H(\F)$ restrict to a conjugation on rigid vertex groups of any JSJ decomposition. Since $\Mod_H(\F)$ has finite index in $\Aut_H(\F)$, this proves the result.
\end{proof}
 
\begin{proposition} \label{SeeAclEqInLambda} Let $\F$ be a free group, and $A \subseteq \F$ a set of parameters. Let $b$ be a tuple in $\F$, such that $\F$ is freely indecomposable with respect to $Ab$. Denote by $\F_A$ the minimal free factor of $\F$ containing $A$, by $\Lambda_{\F_A}$ the cyclic pointed JSJ decomposition of $\F_A$ with respect to $A$, and by $T$ the corresponding tree.
	
There are subgraphs $\Gamma^1_{Ab}, \ldots, \Gamma^r_{Ab}$ of $\Lambda_{\F_A}$ such that the following hold:
\begin{enumerate}
    \item if $i \neq j$, then $\Gamma^i_{Ab}$ and $\Gamma^j_{Ab}$ intersect at most in a disjoint union of $Z$-type vertex;
	\item Suppose $e=(z,x)$ and $e'=(z,y)$ are adjacent edges of $\Gamma^i_{Ab}$ for some $i$, that $x,y$ are neither $Z$-type nor surface type vertices, and let $\hat{e}=(\hat{x}, \hat{z}), \hat{e}'=(\hat{z}, \hat{y})$ be adjacent lifts of $e, e'$ in $T$. Denote by $s_x, s_y$ some generating tuples for the stabilizers of $\hat{x},\hat{y}$. Then the conjugacy class of $(s_x, s_y)$ is in $\acl^{eq}(Ab)$.
	\item Suppose $v$ is a surface type vertex of $\Gamma^i_{Ab}$ for some $i$, and let $\hat{v}$ be a lift of $v$ in $T$. Then $\acl^{eq}(Ab)$ contains the conjugacy class of an element $g$ in the stabilizer of $\hat{v}$ which corresponds to a non boundary parallel simple closed curve.
\end{enumerate}
Moreover, there is a normalized cyclic JSJ decomposition $\Lambda$ of $\F$ relative to $A$ such that the intersection of any block of the minimal subgraph $\Lambda^{min}_{Ab}$ with $\Lambda^{min}_{\F_A}$ (which is isomorphic to $\Lambda_{\F_A}$) is contained in $\Gamma^i_{Ab}$ for some $i$.
\end{proposition}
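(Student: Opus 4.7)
The plan is to use the JSJ decomposition of $\F$ relative to $\langle A, b\rangle$, which exists because $\F$ is freely indecomposable relative to $Ab$. Let $T_{Ab}$ be its pointed cyclic tree of cylinders, with associated graph of groups $\Lambda_{Ab}$. By the preceding proposition, the $\F$-conjugacy class of any tuple of elements inside a rigid vertex stabilizer $U_u$ of $T_{Ab}$ lies in $\acl^{eq}(Ab)$; since $Z$-type vertices of $T_{Ab}$ are rigid and edge stabilizers are cyclic subgroups of adjacent $Z$-type vertex stabilizers, the $\F$-conjugacy class of any edge stabilizer of $T_{Ab}$ also lies in $\acl^{eq}(Ab)$.

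I would then restrict the $\F$-action on $T_{Ab}$ to $\F_A$. Since $A$ fixes the base vertex $v_{Ab}$, this makes $T_{Ab}$ a cyclic $(\F_A,A)$-tree, so universal ellipticity of $T_{\F_A}$ implies that every rigid vertex stabilizer of $T_{\F_A}$ is elliptic in $T_{Ab}$, and hence contained after a suitable choice of lift in some $U_u$. I declare two non-$Z$-type vertices $x,y$ of $T_{\F_A}$ equivalent when adjacent lifts $\hat{x},\hat{y}$ can be chosen so that $\Stab(\hat{x})$ and $\Stab(\hat{y})$ fix a common vertex of $T_{Ab}$, and close under the transitive hull. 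For each equivalence class I form $\Gamma^i_{Ab}$ as the subgraph of $\Lambda_{\F_A}$ spanned by the class together with the $Z$-type vertices and edges of $T_{\F_A}$ connecting them; surface vertices of $T_{\F_A}$ whose stabilizer acts non-elliptically on $T_{Ab}$ are put into their own singleton $\Gamma^i$.

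The three properties are then verified as follows. Property 1 follows from the fact that rigid and surface vertex stabilizers of $T_{\F_A}$ are non-cyclic, while two vertex stabilizers of $T_{Ab}$ meeting in a non-cyclic subgroup must coincide (since edge stabilizers of $T_{Ab}$ are cyclic). Property 2 is a direct application of the preceding proposition to the concatenated tuple $(s_x,s_y)$, which lies entirely inside the common rigid vertex stabilizer $U_u$. For Property 3, when $S_v$ is non-elliptic in $T_{Ab}$, its minimal subtree in $T_{Ab}$ induces a non-trivial cyclic splitting of $S_v$ relative to its boundary; the edge stabilizers of this induced splitting are edge stabilizers of $T_{Ab}$, whose $\F$-conjugacy classes lie in $\acl^{eq}(Ab)$, and they correspond geometrically to non-boundary-parallel simple closed curves on the surface of $v$.

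Finally, to produce the normalized JSJ $\Lambda$ with the required property, I would take $\Lambda_{\F_A}$ as the minimal subgraph of $\F_A$ (this is legitimate by Lemma \ref{MinSubtreeJSJ}) and attach trivially stabilized edges corresponding to the Grushko factor of $\F$ relative to $\F_A$, choosing their attachment points to match the structure of $T_{Ab}$: each sandwich term of $b$ in the sense of Definition \ref{SandwichDef} corresponds to a pair of trivially stabilized edges of $\Lambda$ whose adjacent non-$v_A$ endpoints fall into a common $U_u$, and hence into a common $\Gamma^i_{Ab}$. The main obstacle, and the most delicate part of the argument, will be ensuring that the resulting $\Lambda$ is genuinely a normalized cyclic JSJ decomposition and that every block of $\Lambda^{min}_{Ab}$ maps inside a single $\Gamma^i_{Ab}$; this requires careful selection of Bass--Serre elements for the added trivially stabilized edges, as well as extra care for surface vertices of $\Lambda_{\F_A}$ whose stabilizer is elliptic in $T_{Ab}$ but which are traversed by the imprint of some sandwich term of $b$, where a specific non-boundary-parallel simple closed curve must be exhibited to satisfy Property 3.
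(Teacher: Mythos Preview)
Your overall plan matches the paper's: start from the pointed cyclic JSJ tree $T_{Ab}$ of $\F$ relative to $Ab$ and use the preceding proposition to read off elements of $\acl^{eq}(Ab)$. The key difference is in how the $\Gamma^i_{Ab}$ are produced, and this is where your sketch has a real gap.

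The paper does not define the $\Gamma^i_{Ab}$ by an equivalence relation. It performs a collapse--then--refine construction: collapse in $\Lambda_{Ab}$ all edges whose groups fail to be universally elliptic among $(\F,A)$-trees, obtaining a decomposition $\Lambda(2)$; then refine each non-surface vertex $v_i$ of $\Lambda(2)$ by a normalized JSJ $\Gamma_i$ of its stabilizer relative to the incident edge groups (and to $A$ for the base vertex $v_0$), obtaining $\Lambda(3)$. The $\Gamma^i_{Ab}$ are then the connected components of $\Gamma_0$ after removing trivially stabilized edges, and the required normalized $\Lambda$ is obtained from $\Lambda(3)$ by one further collapse. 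With this construction the ``moreover'' clause is essentially free, and Property~2 follows because if the adjacent lifts $\hat x,\hat y$ fixed \emph{distinct} vertices $u_x\neq u_y$ of $T_{Ab}$, the edges of $[u_x,u_y]$ would have stabilizers commensurable to $\Stab(\hat z)$, hence universally elliptic in $(\F,A)$-trees, hence not collapsed in $\Lambda(2)$ --- forcing $x,y$ into different $\Gamma^i_{Ab}$.

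Your transitive-closure definition does not enforce this. Two adjacent rigid vertices $x,y$ may land in the same class only via a chain through other vertices, while for the specific adjacent lifts $\hat x,\hat z,\hat y$ of Property~2 their stabilizers fix distinct rigid vertices $u_x\neq u_y$ of $T_{Ab}$. In that case $(s_x,s_y)$ does not sit in a single rigid vertex group, and in fact (by the same Dehn-twist argument as in Proposition~\ref{NotInAcl}, applied now in $T_{Ab}$) its conjugacy class is \emph{not} in $\acl^{eq}(Ab)$, so Property~2 genuinely fails for your $\Gamma^i_{Ab}$. A smaller omission in the same vein: you assert that rigid stabilizers of $T_{\F_A}$ land in \emph{rigid} vertex groups of $T_{Ab}$ without explaining why they cannot land in a surface vertex group. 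Finally, you correctly flag the construction of the normalized $\Lambda$ and the elliptic-surface case of Property~3 as loose ends; in the paper's approach both fall out of the collapse--refine construction rather than requiring a separate search for compatible Bass--Serre elements, which is the main advantage of building $\Lambda$ and the $\Gamma^i_{Ab}$ simultaneously.
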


\begin{proof}Consider the pointed cyclic JSJ decomposition $\Lambda(1)$ for $\F$ relative to $Ab$ corresponding to the pointed cyclic tree of cylinders $T(1)$. 

Denote by $\Lambda(2)$ the graph of groups obtained from $\Lambda(1)$ by collapsing all the edges whose corresponding groups are not universally elliptic in $(\F, A)$-trees. Denote by $v_0$ the vertex of $\Lambda(2)$ stabilized by $Ab$, and by $v_1, \ldots, v_s$ the (other) vertices of $\Lambda(2)$ which are not of surface type. 

By Proposition 4.15 in \cite{GuirardelLevittUltimateJSJ}, refining $\Lambda(2)$ at each vertex $v_i$ by a normalized cyclic JSJ decomposition $\Gamma_i$ of the corresponding vertex group relative to its adjacent edges groups (a pointed normalized JSJ decomposition relative to $A$ if $i=0$) gives a JSJ decomposition for $\F$ relative to $A$.

Note also that if $e$ is an edge of $\Lambda(2)$ joining a $Z$-type vertex $z$ to one of the vertices $v_i$, and whose stabilizer preserves a cylinder in $\Gamma_i$, in building this decomposition we attach it to the center of this cylinder. The cylinder of the edge $e$ in the graph thus obtained will consist of a refinement of its cylinder in $\Lambda(2)$ by a star-graph cylinder at several of its boundary points. In particular, note that the JSJ decomposition thus obtained might not be normalized.

Now if $A$ stabilized an edge in $\Lambda(2)$ but not in $\Gamma_0$, necessarily it stabilizes an edge $e$ which we attached to the vertex $v_A$ of $\Gamma_0$. We further modify our JSJ decomposition by adding a valence $1$ edge $e'$ to the cylinder corresponding to $e$, setting the basepoint $v_A$ to be the valence $1$ vertex thus created, and displacing the endpoints of the trivially stabilized edges from the old basepoint to the new. We also add to $\Gamma_0$ the edges $e,e'$.

We denote by $\Lambda(3)$ the graph of groups thus obtained: it is a pointed cyclic JSJ decomposition for $\F$ relative to $A$. By Lemma \ref{MinSubtreeJSJ}, the minimal subtree of $\F_A$ in the tree corresponding to $\Lambda(3)$ lies in $JSJ_{A}(\F_A)$.

Denote by $\Gamma^1_{Ab}, \ldots, \Gamma^r_{Ab}$ the subgraphs of $\Lambda(3)$ corresponding to the connected components of the complement of trivially stabilized edges in $\Gamma_0$.

\begin{figure*}[ht!]\label{Fig4Graphs}
\centering
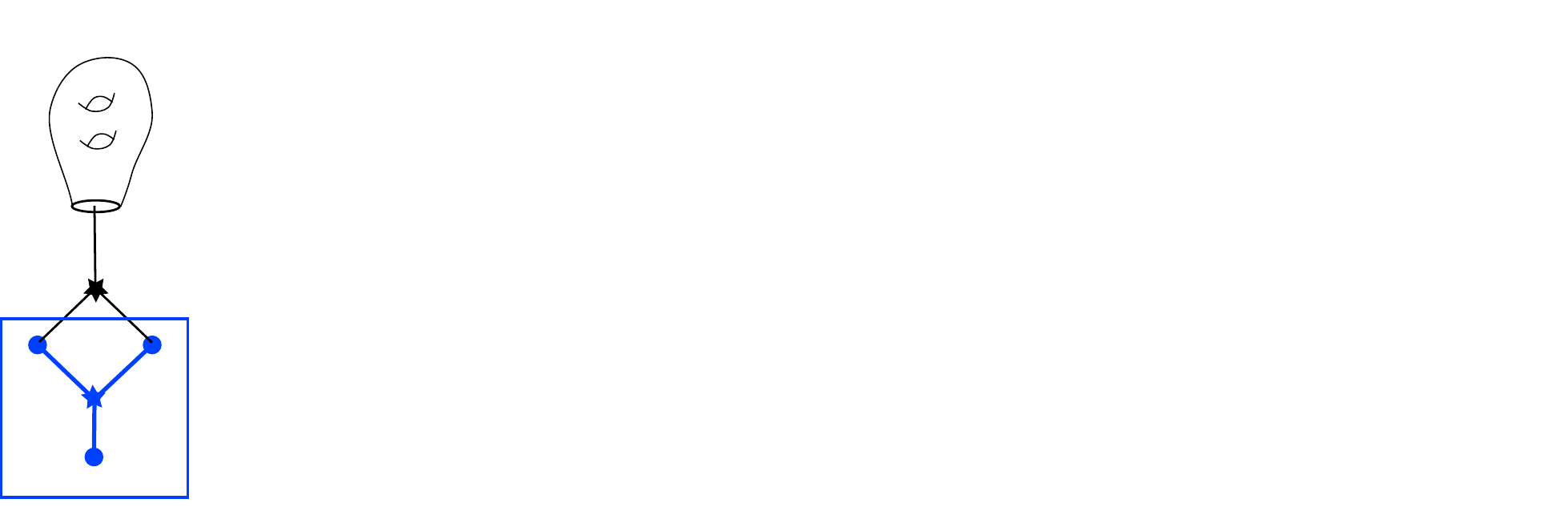
\caption{An example of the construction of the various graphs appearing in the proof of Proposition \ref{SeeAclEqInLambda}. The edges of $\Lambda(1)$ which are not universally elliptic in $(\F_A, A)$-trees are those enclosed by the blue box. As usual star vertices are $Z$-type, dot vertices are rigid, dashed edges are trivially stabilized while full edges correspond to infinite cyclic edge groups.}
\end{figure*}

Recall that there were edges of $\Lambda(2)$ which were attached in $\Gamma_0$ to center of cylinders in the construction of $\Lambda(3)$. By collapsing such edges, we do not change the deformation space, so we get a pointed cyclic JSJ decomposition $\hat{\Lambda}$ for $\F$ relative to $A$ in which cylinders of the minimal subgraph of $\F_A$ are star graphs. 
Moreover, trivially stabilized edges come from the pointed normalized JSJ decomposition $\Gamma_0$, thus they are attached to the base vertex $v_A$. Thus $\hat{\Lambda}$ is a normalized JSJ decomposition. 

Denote by $\hat{\Gamma}^1_{Ab}, \ldots, \hat{\Gamma}^r_{Ab}$ the images of $\Gamma^1_{Ab}, \ldots, \Gamma^r_{Ab}$ in $\hat{\Lambda}$. The minimal subgraph of $Ab$ in $\Lambda(3)$ is contained in $\Gamma_0$. Therefore the minimal subgraph $\hat{\Lambda}^{min}_{Ab}$ of $Ab$ in $\hat{\Lambda}$ is contained in the image of $\Gamma_0$, and the intersection of each block of $\hat{\Lambda}^{min}_{Ab}$ with the minimal subgraph of $\F_A$ is contained in one of the components $\hat{\Gamma}^i_{Ab}$.

Let us see that the other conclusions of the proposition hold for $\hat{\Gamma}^1_{Ab}, \ldots, \hat{\Gamma}^r_{Ab}$. Item 1. is immediate by construction of $\hat{\Lambda}$.

Suppose $e,e'$ are adjacent edges of $\hat{\Gamma}^i_{Ab}$ for some $i$, that $x,y$ are neither $Z$-type nor surface type vertices, and let $\hat{e}=(\hat{x}, \hat{z}), \hat{e}'=(\hat{z}, \hat{y})$ be adjacent lifts of $e, e'$ in $T$. Denote by $s_x, s_y$ some generating tuples for the stabilizers of $\hat{x},\hat{y}$. Note that the stabilizers of $\hat{x}$, $\hat{y}$ are elliptic in any $\F_A$-tree in which $A$ is elliptic, hence they each stabilize a non $Z$-, non surface type vertex of $T(1)$. Since they have a non trivial intersection, they stabilize either a common vertex, or distinct vertices which belong to a common cylinder. In the first case, we have that the conjugacy class of $(s_x,s_y)$ lies in $\acl^{eq}(Ab)$ so we are done. In the second case, the stabilizer of the edges $f, f'$ joining these two distinct vertices is commensurable to the stabilizer of $e$, hence is universally elliptic among $(\F, A)$-trees. Thus $f, f'$ are not collapsed in $T_{\Lambda(2)}$ and $s_x, s_y$ stabilize distinct vertices in $T_{\Lambda(2)}$. But this contradicts the fact that $x,y$ lie in the same $\hat{\Gamma}^i_{Ab}$ and that we took adjacent lifts of $e,e'$.

Similarly, let $v$ be a surface vertex of $\hat{\Gamma}^i_{Ab}$. The boundary subgroups of the stabilizer $S_v$ of $\hat{v}$ are elliptic in any  $\F_A$-tree in which $A$ is elliptic, hence they are elliptic in $T(1)$, and stabilize non surface type vertices. Thus there is a system ${\cal C}$ of non boundary parallel disjoint simple closed curves on the surface associated to $v$ such that the subgroups of $S_v$ corresponding to the connected components of the complement of ${\cal C}$ are elliptic in $T(1)$. If ${\cal C}$ is non empty, any one of the elements of $S_v$ corresponding to a curve in ${\cal C}$ corresponds to an edge group of $T(1)$, hence its conjugacy class lies in $\acl^{eq}(Ab)$. If ${\cal C}$ is empty, this implies $S_v$ is elliptic in $T(1)$. If it stabilizes a non surface type vertex the conjugacy class of any element of $S_v$ lies in $\acl^{eq}(Ab)$ and we are done. 

If it stabilizes a surface type vertex $w$, the boundary subgroups of $S_v$ are boundary subgroups of $S_w$ hence $S_v$ has finite index in $S_w$. But this implies $S_w$ is contained in $\F_A$, and moreover all its boundary subgroups are universally elliptic in $(\F_A, A)$-trees since they have finite index subgroups which are. But then this means that none of the edge adjacent to $w$ are collapsed in $T(2)$, so the image of $w$ is of surface type in $T(2)$ and thus its image in $\hat{T}$ does not lie in any of the $\hat{\Gamma}^i_{Ab}$. Since this image is stabilized by $S_v$ which does stabilize a vertex in some $\hat{\Gamma}^i_{Ab}$, we get a contradiction.
\end{proof}

The following proposition will help us conclude
\begin{proposition} \label{NotInAcl}
Let $\F$ be a free group, and $A \subseteq \F$ a set of parameters. Let $T$ be a normalized JSJ tree for $\F$ relative to $A$.
If $e=(z,x)$ and $e'=(z,y)$ are two non trivially stabilized edges of $T$ with $x,y$ non $Z$-type vertices, the conjugacy class of $(s_x, s_y)$ is not in $\acl^{eq}(A)$.

If $v$ is a surface type vertex of $T$, the conjugacy classes of two elements $g,h$ corresponding to non boundary parallel simple closed curves on the surface associated to $v$ fork over $A$.

\end{proposition}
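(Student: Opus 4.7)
The plan for both parts is to exhibit, using the structure of the JSJ, infinitely many automorphisms in $\Aut_A(\F)$ that act non-trivially on one coordinate while preserving the other, thereby witnessing non-algebraicity (part 1) and forking (part 2).

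For part (1), I would consider the Dehn twist $\tau_e$ around the edge $e$ by a generator $\gamma$ of $\Stab(e)$. Orienting $\tau_e$ so that it acts as the identity on the half-tree of $T \setminus \{e\}$ containing $v_A$ (and hence containing $z$, $e'$, and $y$) guarantees that $\tau_e \in \Aut_A(\F)$. With this orientation, $\tau_e^n(s_y) = s_y$ and $\tau_e^n(s_x) = \gamma^n s_x \gamma^{-n}$. If some $h \in \F$ were to conjugate the pair $(s_x, s_y)$ to $(\gamma^n s_x \gamma^{-n}, s_y)$, then $h$ would centralize $s_y$ while $h \gamma^{-n}$ would centralize $s_x$. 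Since in a free group the centralizer of a generating tuple of a non-cyclic subgroup is trivial, in the typical case where both $\Stab(x)$ and $\Stab(y)$ are non-cyclic this forces $h = 1$ and thus $n = 0$, so the conjugacy classes of the $\tau_e^n \cdot (s_x, s_y)$ are pairwise distinct. The resulting infinite $\Aut_A(\F)$-orbit forbids the conjugacy class of $(s_x, s_y)$ from lying in $\acl^{eq}(A)$.

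The delicate subcase is when one of $x,y$ has cyclic stabilizer: since it is non-$Z$-type, this forces it to coincide with the basepoint $v_A$ with $A$ cyclic, and then the twist $\tau_e$ may be inner. Here I would combine Dehn twists around $e'$ (which lies in the same cylinder as $e$) with twists around the other edges of this cylinder adjacent to non-$v_A$ vertices, together with the automorphisms $t_f \mapsto t_f w$ coming from the trivially-stabilized edges $f$ of the normalized decomposition. If $y$ happens to be of surface type there is additional flexibility from surface Dehn twists along simple closed curves disjoint from the boundary component corresponding to $e'$. I expect this case analysis to be the main technical obstacle.

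For part (2), I would choose a non-boundary-parallel simple closed curve $\delta$ on the surface associated to $v$ that is disjoint from the curve representing $g$ and intersects essentially the curve representing $h$; such a $\delta$ exists by a standard change-of-coordinates argument on the underlying surface. Then the powers $\tau_\delta^n$ of the associated surface-type modular automorphism fix the conjugacy class of $g$ and send $h$ to pairwise non-conjugate elements $h_n := \tau_\delta^n(h)$. After composition with a suitable inner automorphism we may arrange $\tau_\delta^n \in \Aut_A(\F)$ and $\tau_\delta^n(g) = g$ on the nose. To translate this into forking, I would follow the scheme of Proposition \ref{IndependenceSpecialCase} in reverse: by Theorem 5.3 of \cite{PerinSklinosForking} the orbit $\Aut_{Ah}(\F) \cdot g$ is definable over $Ah$, and its translates under $\tau_\delta^n$ produce a family of sets, each containing a conjugate of $g$, whose pairwise intersection is controlled by the inability of the orbits $\Aut_{Ah_n}(\F) \cdot g$ and $\Aut_{Ah_m}(\F) \cdot g$ to overlap significantly when $h_n, h_m$ are far apart in the conjugacy class sense. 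Showing that this family is in fact $k$-inconsistent for some $k$, using the fine structure of the algebraic closures $\acl^{eq}(Ah_n)$, is the main step in this direction.
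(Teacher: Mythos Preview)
Your treatment of part (1) follows the same idea as the paper: apply powers of a Dehn twist about $e$ and argue that the resulting images of $(s_x,s_y)$ fall into infinitely many distinct conjugacy classes. The paper's argument is shorter than your plan. It simply observes that since $x,y$ are distinct non-$Z$-type vertices, at most one of them can be the basepoint $v_A$; hence one of them, say $x$, has non-abelian stabilizer. Orienting the twist so that $s_x$ is fixed, any conjugator $\gamma$ with $\gamma(s_x,s_y)\gamma^{-1}=(s_x,\epsilon^m s_y\epsilon^{-m})$ must centralize $\langle s_x\rangle$, hence $\gamma=1$, and the paper takes this to conclude. The elaborate case analysis you sketch for the ``delicate subcase'' (combining twists around other edges of the cylinder, modifying Bass--Serre generators of trivially stabilized edges, invoking extra surface twists) does not appear in the paper.

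For part (2) your plan departs from the paper and has a real gap. The paper's proof is a single line: after conjugating so that $g,h\in\F_A$, it invokes Theorem~2 of \cite{PerinSklinosForking}, which characterizes forking over $A$ in the case where $A$ is not contained in a proper free factor. Under that criterion, two elements represented by non-boundary-parallel simple closed curves on a common surface-type vertex of the JSJ of $\F_A$ relative to $A$ automatically fork. By contrast, you propose to build a forking witness directly: translate the definable orbit $X=\Aut_{Ah}(\F)\cdot g$ by the $A$-automorphisms $\tau_\delta^n$ and then argue $k$-inconsistency of the family. The step you yourself flag as ``the main step'' --- controlling the overlap of the orbits $\Aut_{Ah_n}(\F)\cdot g$ for distinct $n$ --- is essentially the hard direction of the cited theorem, and nothing in your sketch indicates how to carry it out. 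You should cite the earlier result rather than attempt to reprove it here.
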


\begin{proof} Up to conjugating $(s_x, s_y)$, we can assume that it lies in $\F_A$. It is thus enough to show that the orbit of $(s_x, s_y)$  under $\Aut_A(\F_A)$ contains infinitely many distinct conjugacy classes. 

One of $x,y$ is not the basepoint so without loss of generality the stabilizer of $x$ is not cylic. Let $\tau_e$ be a Dehn twist about $e$ by some element $\epsilon$ of $\Stab(e)$. For any $m$, the equation 
$$ \gamma (s_x, s_y) \gamma^{-1} = (s_x, \epsilon^m s_y \epsilon^{-m})$$
implies that $\gamma=1$ since $\langle s_x \rangle$ is not abelian. Thus the pairs $\tau^m_e(s_x,s_y)$ lie in distinct conjugacy classes.

For the second part, assume without loss of generality that $g,h $ lie in $\F_A$. By Theorem 2 of \cite{PerinSklinosForking}, $g$ forks with $h$ over $A$. This implies the result.
\end{proof}

We can now prove Proposition \ref{LefttoRight}.
\begin{proof} Assume $b \forkindep_A c$. We will construct a normalized JSJ decomposition for $\F$ relative to $A$ in which any two blocks of the minimal subgraphs $\Lambda^{min}_{Ab}$ and $\Lambda^{min}_{Ac}$ intersect at most in a disjoint union of envelopes of rigid vertices.

Denote by $\F_A, \F_{Ab}$ and $\F_{Ac}$ the smallest free factors of $\F$ that contains $A, Ab$ and $Ac$ respectively. Let $\Lambda_A$ be the cyclic pointed JSJ decomposition of $\F_A$ relative to $A$, and let $H$ be such that $\F=H*\F_A$. Let $(\Gamma^j_{Ab})^r_{j=1}$ and $(\Gamma^j_{Ac})^s_{j=1}$ be the (possibly disconnected) subgraphs of $\Lambda_A$ obtained by applying Proposition \ref{SeeAclEqInLambda} to $Ab, Ac$ respectively. We also get a normalized JSJ decomposition $\tilde{\Lambda}_{Ab}$ (resp. $\tilde{\Lambda}_{Ac}$) for $\F_{Ab}$ (resp. $\F_{Ac}$), in which each block of the minimal subgraph for $Ab$ (resp. $Ac$) lies inside one of the subgraphs $\Gamma^j_{Ab}$ (resp. $\Gamma^j_{Ac}$). We can extend these JSJ decompositions to JSJ decompositions $\tilde{\Lambda}^+_{Ab}$ and $\tilde{\Lambda}^+_{Ac}$ of $\F$ relative to $A$ by adding a number of trivially stabilized loops to the base vertex.

Suppose there exist $i,j$ such that the intersection of $\Gamma^i_{Ab}$ and $\Gamma^j_{Ac}$ is not a disjoint union of envelopes of rigid vertices. Either it contains edges $e=(x,z), e'=(z,y)$ as in item 2. of  Proposition \ref{SeeAclEqInLambda}, or it contains a surface type vertex $v$. In the first case, we get that the intersection of $\acl^{eq}(Ab)$ and $\acl^{eq}(Ac)$ contains the conjugacy class of a pair $(s_x, s_y)$ as in item 2. of Proposition \ref{SeeAclEqInLambda}. This conjugacy class does not lie in $\acl^{eq}(A)$ by Proposition \ref{NotInAcl}, which contradicts the independence of $b$ and $c$ over $A$. In the second case, there are elements $g,g'$ corresponding to non boundary parallel simple closed curves on the surface associated to $v$ such that the conjugacy class of $g$ lies in $\acl^{eq}(Ab)$ and that of $g'$ lies in $\acl^{eq}(Ac)$. But such elements fork over $A$ by Proposition \ref{NotInAcl}.

Thus we know that the intersection of any two of $\Gamma^i_{Ab}$ and $\Gamma^j_{Ac}$ must be a disjoint union of envelopes of rigid vertices. In this case, we consider the group $\hat{\F} = H * \F_A * H'$ where $H'$ is a copy of $H$, and the element $c'$ of $\F_A*H'$ corresponding to $c$ under the obvious isomorphism $H * \F_A \to \F_A * H'$. We can build a graph of group $\hat{\Lambda}$ for $\hat{\F}$ by "amalgamating" the graphs of groups $\tilde{\Lambda}^+_{Ab}$ and $\tilde{\Lambda}^+_{Ac}$ along $\Lambda_A$, in other words by adding to $\Lambda_A$ trivially stabilized edges ($\rk(H) + \rk(H')$ of them) according to how they are attached in $\tilde{\Lambda}^+_{Ab}$ and $\tilde{\Lambda}^+_{Ac}$ respectively, and associating them with bases for $H$ and $H'$ respectively. 

Note that in this graph of groups $\hat{\Lambda}$, the minimal subgraphs for $Ab$ and $Ac'$ intersect at most in a disjoint union of envelopes of rigid vertices, hence by Theorem \ref{DisjointnessImpliesIndependence}, we see that $b$ is independent from $c'$ over $A$. By Facts \ref{ForkTrans} and \ref{ForkAlg} we get that $b$ is independent 
from $c'$ over $acl(A)$. Now recall that we assumed that $b$ is independent from $c$ over $acl(A)$: by Fact \ref{AlgStat}, every type over an algebraically closed set is stationary and since $c$ and $c'$ 
have the same type over $acl(A)$, they have the same type over $acl(A)b$. Therefore, by homogeneity, 
they are in the same orbit under $\Aut(\hat{\F}/Ab)$. Since $\Aut(\hat{\F}/Ab)$ preserves $\F_A$, the automorphism sending $c'$ to $c$ and fixing $Ab$ sends the decomposition $\hat{\F} = H*\F_A*H'$ to a decomposition $\hat{\F} = H_1*\F_A*H'_1$ with $b \in H_1*\F_A$ and $c \in \F_A*H'_1$. By Kurosh's Theorem, this induces on $\F$ a decomposition $\F = \F_b * \F_{A} * \F_c$ for which $b \in \F_b * \F_{A}$ and $c \in \F_{A} * \F_c$.

Now this means that $\F_{Ab} = \F_b * \F_A$ while $\F_{Ac}= \F_A * \F_c$. But now we can build a JSJ decomposition for $\F$ by amalgamating the decompositions $\tilde{\Lambda}_{Ab}$ and $\tilde{\Lambda}_{Ac}$, and clearly in this decomposition the minimal subgraphs of $Ab$ and $Ac$ intersect at most in a disjoint union of envelopes of rigid vertices.  
\end{proof}

\providecommand{\bysame}{\leavevmode\hbox to3em{\hrulefill}\thinspace}
\providecommand{\MR}{\relax\ifhmode\unskip\space\fi MR }
\providecommand{\MRhref}[2]{%
  \href{http://www.ams.org/mathscinet-getitem?mr=#1}{#2}
}
\providecommand{\href}[2]{#2}

\end{document}